\newcommand{\diver}{{{\rm{div}}}}
\newcommand{\OM}{{\Omega}}
\newtheorem{theorem}{Theorem}
\newtheorem{corollary}[theorem]{Corollary}
\newtheorem{definition}[theorem]{Definition}
\newtheorem{lemma}[theorem]{Lemma}
\newtheorem{remark}[theorem]{Remark}
\newtheorem{assunz}[theorem]{Assumption}
\newcommand{\Hess}{{\rm Hess\,}}
\newcommand{\disp}{\displaystyle}
\newcommand{\erre}{\mathbb{R}}
\newcommand{\esse}{\mathbb{S}}
\newcommand{\di}{\mathrm{d}} 
\newcommand{\ra}{\rightarrow}
\newcommand{\pair}[1]{\langle#1\rangle}      
\newcommand{\eps}{\varepsilon}                           
\newcommand{\metric}{\pair{\;,}}                          
\renewcommand{\tilde}[1]{\widetilde{#1}}
\newcommand{\ess}{\mathrm{ess}}
\newcommand{\rad}{\mathrm{rad}}
\begin{document}

\title[Eigenvalue estimates for submanifolds]
  {Eigenvalue estimates for submanifolds of
warped product spaces}

\author{G. P.  Bessa}
\address{Departamento de Matem\'atica \\Universidade Federal do Cear\'a-UFC\\
60455-760 Fortaleza, CE, Brazil}
\email{bessa@mat.ufc.br}
\author{S.C. Garc\'ia-Mart\'inez}
\address{ Departamento de Matem\'aticas,\\ Universidad de Murcia, \\ Campus de Espinardo, 30100 Espinardo, Murcia, Spain}
\email{sandracarolina.garcia@um.es}
\author{L. Mari }
\address{Departamento de Matem\'atica \\Universidade Federal do Cear\'a-UFC\\
60455-760 Fortaleza, CE, Brazil}
\email{lucio.mari@libero.it}
\author{H.F. Ramirez-Ospina}
\address{ Departamento de Matem\'aticas,\\ Universidad de Murcia, \\ Campus de Espinardo, 30100 Espinardo, Murcia, Spain}
\email{hectorfabian.ramirez@um.es}

\maketitle
\begin{abstract}
In this paper, we give lower bounds for the fundamental tone of open sets in minimal submanifolds immersed into warped product spaces of type $N^n \times_f Q^q$, where $f \in C^\infty(N)$. Some applications, also regarding the essential spectrum, illustrate the applicability and the generality of our results.
\end{abstract}

\section{Introduction}
Let $M$ be a connected Riemannian manifold, possibly incomplete, and let $\Delta=\diver \circ \nabla$ be the Laplace-Beltrami operator on acting on $C^\infty_{o}(M)$,  the space of smooth functions with compact support. 
When $M$ is geodesically complete, $\Delta$ is essentially self-adjoint, thus there is a unique self-adjoint extension to  an unbounded operator, denoted by $\Delta$, whose domain is the set of  functions $ f\in L^{2}(M)$ so that $\Delta\! f\in L^{2}(M)$, see
 \cite{chernoff}, \cite{davies} and \cite{strichartz}. If $M$ is not complete  we will always  consider the Friedrichs extension of $\Delta$.  Denote by $\sigma(-\Delta)$ and $\sigma_\ess(-\Delta)$, respectively, the spectrum and the essential spectrum of $-\Delta$. Given an open subset $\OM \subset M$, the fundamental tone of $\OM$, $\lambda^{\ast}(\OM)$, is defined by
\[\lambda^{\ast}(\OM)= \inf \sigma(-\Delta) = \inf\left\{\frac{\smallint_{\OM}\vert
\nabla f \vert^{2}}{\smallint_{\OM} f^{2}};\, f\in
 H_{0}^1(\Omega) \backslash \{ 0\}\right\}.
\]
When $\OM$ has compact closure and Lipschitz boundary, $\lambda^{\ast}(\OM)$ coincides with the
first eigenvalue $\lambda_1(\OM)$ of $\OM$, with Dirichlet boundary data
on $\partial \OM$. Its associated eigenspace is $1$-dimensional and spanned by any solution $u$ of
$$
\left\{\begin{array}{l}
\Delta u + \lambda_1(\Omega)u =  0  \qquad \text{on } \, \Omega,\\[0.2cm]
u=0 \qquad \text{on } \, \partial \Omega.
\end{array}\right.
$$
The relations between the fundamental tone of open sets of $M$ and their geometric invariants has been the subject to an intensive research in the past 50 years. Among a huge literature, we limit ourselves to quote the classics  \cite{berard}, \cite{berger-gauduchon-mazet}, \cite{chavel} and references therein for a detailed picture.  In particular,  a great effort has been done to estimate the fundamental tone of minimal submanifolds of well-behaved ambient spaces (for instance, in \cite{bessa-montenegro1}, \cite{bessa-montenegro2}, \cite{candel}, \cite{cheng-li-yau} and \cite{cheung-leung}). In this paper, we move a step further by giving lower bounds for the fundamental tone of manifolds which are minimally immersed in ambient spaces  $N^n\times_f Q^q$ carrying a warped product structure, see Theorem \ref{thm2} below. As we shall see in the last section, the generality of our setting allows applications to submanifolds of cylinders, cones, tubes, improving  certain recent results in the literature (\cite{BS}, \cite{bessa-montenegro1}, \cite{bessa-montenegro2}). We remark that there have been an increasing interest in the study of minimal and constant mean curvature submanifolds in product spaces $N\times \mathbb{R}$, after the discovery of many beautiful examples such as those in \cite{meeks-rosenberg}, \cite{meeks-rosenberg2}, and this motivates a thorough investigation of the spectrum of such submanifolds. In this respect, we hope that our estimates could be useful.

\section{Preliminaries}

\noindent \textbf{Isometric immersions}\\
Let $M$ and $W$ be smooth 
Riemannian manifolds of dimension
 $m$ and $n+q$  respectively and $\varphi \colon M \hookrightarrow W$ be an isometric immersion.  Consider a smooth function $F:W \rightarrow \mathbb{R}$ and the composition $F\circ
 \varphi \colon M \rightarrow \mathbb{R}$. Identifying $X$ with $d\varphi (X)$, the Hessian of $F\circ \varphi$ at $x\in M$ is given by
 \begin{equation}\Hess_{M}(F\circ
 \varphi) (x)  \,(X,Y)= \Hess_{W}F(\varphi (x))\,(X,Y) +
 \langle \nabla F,\,\sigma (X,Y)\rangle_{\varphi (x)},
\label{eqBF2}
\end{equation}
where $\sigma(X,Y)$ is the second fundamental form of $\varphi$. Tracing (\ref{eqBF2}) with respect to an orthonormal basis $\{ e_{1},\ldots e_{m}\}$,
\begin{align}
\Delta_{M} (F\circ
 \varphi) (x) & =\sum_{i=1}^{m}\left\{\Hess_{W}F (\varphi
(x))\,(e_{i},e_{i}) + \langle \nabla F,\, \sum_{i=1}^{m}\sigma
(e_{i},e_{i})\rangle\right\}\nonumber\\
&=\sum_{i=1}^{m}\Hess_{W}F (\varphi
(x))\,(e_{i},e_{i}) + m\langle \nabla F,H\rangle,\label{eqBF3}
\end{align}
where  $H = m^{-1}\mathrm{tr}(\sigma)$ is the normalized mean curvature vector. Formulae \eqref{eqBF2} and \eqref{eqBF3} are well known in
the literature, see \cite{jorge-koutrofiotis}.\\
\par
\noindent \textbf{Models and Hessian comparisons}\\
Hereafter, we denote with $\erre^+_0= [0,+\infty)$. Let $g\in C^2(\mathbb{R}_0^+)$ be  positive in $(0, R_0)$, for some $0<R_0\leq \infty$, and satisfying
$$
g(0)=0, \qquad g'(0)=1.
$$
The $\kappa$-dimensional model manifold $\mathbb{Q}_g^{\kappa}$ constructed from the function $g$ is the ball $B_R(o) \subseteq \erre^\kappa$ with metric given, in polar geodesic coordinates centered at $o$, by
\[\di s^2_g=\di r^2+g(r)^2\left\langle\, ,\right\rangle_{\mathbb{S}^{\kappa-1}},\] where
$\left\langle\, ,\right\rangle_{\mathbb{S}^{\kappa-1}}$ is the standard metric on the unit $(\kappa-1)$-sphere. The radial sectional curvature and the Hessian of the distance function $r$ on $\mathbb{Q}^\kappa_g$ are given by the expressions
$$
K^\rad = - \frac{g''(r)}{g(r)}, \qquad \Hess r = \frac{g'(r)}{g(r)} \Big( \di s^2 - \di r \otimes \di r \Big).
$$

From the first relation, we see that a model can equivalently be specified by prescribing its radial sectional curvature $G \in C^\infty(\erre^+_0)$ and recovering $g$ as the solution of
\begin{align}\label{eqg}
\left\{
  \begin{array}{l}
   g''-Gg=0, \\[0.1cm]
   g(0)=0, \quad g'(0)=1,
  \end{array}
\right.
\end{align}
on the maximal interval $(0, R_0)$ where $g>0$.\par
\vspace{2mm}

For the proof of our main results we will make use of the following version of the Hessian Comparison Theorem, see \cite{GW} and  \cite[Chapter 2]{70}.
\begin{theorem}\label{hessiancomparison} Let $Q^q$ be a complete Riemannian $q$-manifold. Fix a  point $o\in Q$, denote by $\rho_{_Q}(x)$ the Riemannian distance function from $o$ and let $D_o=Q\backslash\text{cut}(o)$ be the domain
of the normal geodesic coordinates centered at $o$. Given $G \in C^\infty(\erre^+_0)$, let $g$ be the solution of the Cauchy problem \eqref{eqg}, and let $(0,R_0)\subseteq [0,+\infty)$ be the maximal interval where $g$ is positive. If the radial sectional curvature of $Q$ satisfies
\begin{align}
\label{1.30.bis}
 K^{\emph{rad}}_{Q}\leq - G(\rho_{_Q}) \quad (\textrm{respectively, } \,\, \,K^{\emph{rad}}_{Q}\geq - G(\rho_{_Q}) ),
\end{align}
on $B(o,R_0)$, then
\[
\Hess_{_Q} \rho_{_Q}\!\geq\!\frac{g'(\rho_{_Q})}{g(\rho_{_Q})}\left(\left\langle \,,\,\right\rangle_Q\!\!-d\rho_{_Q}\otimes d\rho_{_Q}\right)  \,\,(\text{respectively, } 
\leq )
\]
on $D_o\cap B(o,R_0)\backslash\{ o\}$, in the sense of quadratic forms.

\end{theorem}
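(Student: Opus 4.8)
The plan is to reduce the inequality to a one–dimensional comparison along the radial geodesics issuing from $o$. Fix $p\in D_o\cap B(o,R_0)\setminus\{o\}$, set $t_0=\rho_{_Q}(p)<R_0$, and let $\gamma\colon[0,t_0]\to Q$ be the unit–speed minimizing geodesic from $o$ to $p$, which exists by completeness of $Q$. Since $t_0<R_0$ the curve $\gamma$ lies in $B(o,R_0)$, so the curvature bound \eqref{1.30.bis} holds along it; and since $p\notin\text{cut}(o)$, $\gamma|_{[0,t_0]}$ is the unique minimizing geodesic to $p$ and has no point conjugate to $o$ in $(0,t_0]$. As $\Hess_{_Q}\rho_{_Q}$ annihilates $\nabla\rho_{_Q}=\dot\gamma(t_0)$ (the integral curves of $\nabla\rho_{_Q}$ being geodesics) and both sides of the claimed estimate are quadratic forms, it suffices to prove it for a unit vector $X\in T_pQ$ with $X\perp\dot\gamma(t_0)$, the general case following by splitting $X$ into radial and tangential parts and using homogeneity. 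For such $X$, let $J$ be the unique Jacobi field along $\gamma$ with $J(0)=0$ and $J(t_0)=X$; then $\langle J,\dot\gamma\rangle\equiv0$, $J$ has no zero on $(0,t_0]$, and a standard computation (integration by parts against the Jacobi equation) gives $\Hess_{_Q}\rho_{_Q}(X,X)=\langle\nabla_tJ(t_0),J(t_0)\rangle=I_{t_0}(J,J)$, where $I_{t_0}$ is the index form of $\gamma|_{[0,t_0]}$.

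Under the hypothesis $K^{\rad}_{Q}\le-G(\rho_{_Q})$, which forces the lower Hessian bound, I would argue with $\phi:=|J|$, which is smooth and positive on $(0,t_0]$. Differentiating $\phi^2=\langle J,J\rangle$ twice, and combining the Jacobi equation $\nabla_t^2J=-R(J,\dot\gamma)\dot\gamma$, the inequality $\langle R(J,\dot\gamma)\dot\gamma,J\rangle\le-G\,|J|^2$, and Cauchy–Schwarz in the form $|\nabla_tJ|\ge|\phi'|$, one obtains $\phi''\ge G\,\phi$ on $(0,t_0]$. Since $g$ solves $g''=Gg$ and $g>0$ on $(0,R_0)\supseteq(0,t_0]$, the Wronskian $W:=\phi'g-\phi g'$ satisfies $W'=(\phi''-G\phi)\,g\ge0$, while $\phi(t)=|\nabla_tJ(0)|\,t+O(t^3)$ and $g(t)=t+O(t^3)$ as $t\to0^{+}$ give $W(0^{+})=0$. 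Hence $W(t_0)\ge0$, that is $\phi'(t_0)/\phi(t_0)\ge g'(t_0)/g(t_0)$, and as $\Hess_{_Q}\rho_{_Q}(X,X)=\phi(t_0)\phi'(t_0)$ with $|X|=1$ we conclude $\Hess_{_Q}\rho_{_Q}(X,X)\ge g'(t_0)/g(t_0)$.

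Under the opposite hypothesis $K^{\rad}_{Q}\ge-G(\rho_{_Q})$, giving the upper Hessian bound, I would use the Index Lemma: since $\gamma|_{[0,t_0]}$ has no conjugate point in $(0,t_0]$, it makes $J$ the minimizer of $I_{t_0}$ among piecewise–smooth fields $V\perp\dot\gamma$ with $V(0)=0$ and $V(t_0)=X$. Testing with $V(t)=(g(t)/g(t_0))\,E(t)$, where $E$ is the parallel unit field along $\gamma$ with $E(t_0)=X$, one finds $I_{t_0}(V,V)=g(t_0)^{-2}\int_0^{t_0}\left((g')^2-g^2\langle R(E,\dot\gamma)\dot\gamma,E\rangle\right)\di t\le g(t_0)^{-2}\int_0^{t_0}\left((g')^2+g^2G\right)\di t$; since $g^2G=g\,g''$ the last integrand is $(g\,g')'$, so the bound equals $g'(t_0)/g(t_0)$. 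Therefore $\Hess_{_Q}\rho_{_Q}(X,X)=I_{t_0}(J,J)\le I_{t_0}(V,V)\le g'(t_0)/g(t_0)$, and the quadratic–form statement follows exactly as in the first case.

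The one genuinely delicate step is the derivation, in the first case, of the scalar differential inequality $\phi''\ge G\phi$: it is crucial there to work with $\phi=|J|$ rather than with $|J|^2$ and to exploit Cauchy–Schwarz, and this asymmetry is precisely why the reversed curvature sign has to be handled through the minimizing property of the Jacobi field instead. One could, alternatively, obtain both inequalities at once from a Riccati comparison for the shape operators of the geodesic spheres $\partial B(o,t)$ along $\gamma$, at the cost of setting up the matrix Jacobi and Riccati equations.
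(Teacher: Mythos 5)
Your argument is correct and complete. Note, however, that the paper does not prove Theorem \ref{hessiancomparison} at all: it is quoted as a known result, with the proof delegated to the references \cite{GW} and \cite[Chapter 2]{70}. So there is no ``paper proof'' to match; what you have written is a self-contained proof of the quoted statement. Your route is the classical Jacobi-field one (essentially the Greene--Wu approach): reduce to unit vectors orthogonal to the radial direction, identify $\Hess_{_Q}\rho_{_Q}(X,X)$ with $\langle\nabla_tJ(t_0),J(t_0)\rangle=I_{t_0}(J,J)$ for the Jacobi field $J$ with $J(0)=0$, $J(t_0)=X$, and then treat the two curvature signs asymmetrically --- the Sturm/Wronskian comparison for $\phi=|J|$ (where the Cauchy--Schwarz step $|\nabla_tJ|\ge|\phi'|$ is indeed the crux) when $K^{\rad}\le -G$, and the Index Lemma with the test field $(g/g(t_0))E$ when $K^{\rad}\ge -G$. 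All the supporting facts you invoke are justified by the hypotheses: $t_0<R_0$ keeps $g>0$ on $(0,t_0]$, and $p\notin\mathrm{cut}(o)$ rules out conjugate points, so $J$ exists, is nonvanishing on $(0,t_0]$, and minimizes the index form. The reference \cite[Chapter 2]{70} instead proceeds via the Riccati comparison for $\Hess\rho$ along radial geodesics, which handles both inequalities in one uniform argument and extends more readily to weak (distributional) versions beyond the cut locus; you correctly flag this alternative. For the purposes of this paper either proof suffices, since the theorem is only applied inside $D_o\cap B(o,R)$ with $B_Q(o,R)\subseteq Q\setminus\mathrm{cut}(o)$ by Assumption \ref{assumpt}.
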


\noindent \textbf{Eigenvalues and Eigenfunctions}\\
The generalized version of Barta's Eigenvalue Theorem \cite{barta}, 
 proved in \cite{bessa-montenegro2} will be important in the sequel. 
\begin{theorem}
\label{thmBM1}Let
$\OM$ be an open set in a Riemannian manifold $M$ and let $f\in C^{2}(\Omega)$, $f>0$ on $\Omega$. Then
\begin{equation}\label{eqThmP1}
 \lambda^{\ast}(\OM)\geq
 \inf_{ \OM} \left(-\frac{\Delta f}{f}\right).
\end{equation}
\end{theorem}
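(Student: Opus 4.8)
The plan is to combine the variational characterization of $\lambda^{\ast}(\OM)$ with the ground-state substitution $\phi = fu$, which is the analytic core of Barta's argument. Write $c = \inf_{\OM}\big(-\Delta f/f\big)$; if $c=-\infty$ there is nothing to prove, so assume $c>-\infty$. Since $H^1_0(\OM)$ is by definition the $H^1$-closure of $C_0^\infty(\OM)$ and the Rayleigh quotient is continuous on $H^1_0(\OM)\setminus\{0\}$, it suffices to establish the inequality $\int_\OM |\nabla\phi|^2 \geq c\int_\OM \phi^2$ for every $\phi\in C_0^\infty(\OM)$.

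Fix such a $\phi$, with compact support $K\subset\OM$. Because $f$ is continuous and strictly positive on $\OM$, one has $f\geq \min_K f>0$ on $K$, so $u:=\phi/f$ belongs to $C_0^\infty(\OM)$ and is an admissible test function. I would then expand $\nabla\phi = u\nabla f + f\nabla u$, giving
\[
|\nabla\phi|^2 = u^2|\nabla f|^2 + 2uf\,\langle\nabla f,\nabla u\rangle + f^2|\nabla u|^2,
\]
and, separately, integrate by parts the potential term, the boundary contributions vanishing since $u$ has compact support in $\OM$:
\[
-\int_\OM \phi^2\,\frac{\Delta f}{f} = -\int_\OM u^2 f\,\Delta f = \int_\OM \big\langle \nabla(u^2 f),\nabla f\big\rangle = \int_\OM \Big(u^2|\nabla f|^2 + 2uf\,\langle\nabla u,\nabla f\rangle\Big).
\]
Subtracting these two displays, the first two summands cancel and one is left with the Picone-type identity
\[
\int_\OM |\nabla\phi|^2 \;-\; \int_\OM \phi^2\Big(-\frac{\Delta f}{f}\Big) \;=\; \int_\OM f^2|\nabla u|^2 \;\geq\; 0 .
\]

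Since $-\Delta f/f\geq c$ pointwise on $\OM$, this yields $\int_\OM |\nabla\phi|^2 \geq c\int_\OM \phi^2$ for all $\phi\in C_0^\infty(\OM)$, whence $\lambda^{\ast}(\OM)\geq c$ after passing to the infimum over $\phi$ and invoking the density of $C_0^\infty(\OM)$ in $H^1_0(\OM)$. I do not anticipate a genuine obstacle here: the only points that require attention are the trivial case $c=-\infty$, the verification that $u=\phi/f$ is a legitimate compactly supported $H^1$ function, and the reduction from $H^1_0(\OM)$ to $C_0^\infty(\OM)$; in particular, the integration by parts needs no regularity of $\partial\OM$ precisely because $u$ is supported away from the boundary.
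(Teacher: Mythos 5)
Your argument is correct. Note first that the paper does not prove this statement at all: it is quoted as the generalized Barta theorem from Bessa--Montenegro (reference \cite{bessa-montenegro2}), whose proof runs the same computation in the equivalent form $\int_\OM|\nabla\phi|^2\geq\int_\OM(\diver X-|X|^2)\phi^2$ with $X=-\nabla\log f$, so your ground-state substitution $\phi=fu$ and the resulting Picone-type identity $\int_\OM|\nabla\phi|^2-\int_\OM\phi^2\left(-\frac{\Delta f}{f}\right)=\int_\OM f^2|\nabla u|^2$ is essentially the classical route and fully adequate. Your handling of the technical points is also right: the case $c=-\infty$, the reduction to $C_0^\infty(\OM)$ by density and continuity of the Rayleigh quotient, and the observation that the integration by parts needs no boundary regularity because $u^2f$ is compactly supported in $\OM$. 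The only slip is cosmetic: since $f$ is merely $C^2$, the quotient $u=\phi/f$ is of class $C_0^2(\OM)$ rather than $C_0^\infty(\OM)$; this is more than enough regularity for the expansion of $\nabla\phi$ and for the divergence theorem applied to $u^2f\,\nabla f$, so nothing in the proof is affected.
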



We recall that, given a model $\mathbb{Q}^\kappa_g$ with $g>0$ on $(0,R_0)$, and given $R \in (0,R_0)$, the first eigenfunction $v$ of the geodesic ball $B_g(R)$ centered at $o$ is radial. This can be easily seen by proving that its spherical mean
$$
\bar{v}(r) = \frac{1}{g(r)^{\kappa -1}}\int_{\partial B_g(R)} v
$$
is still an eigenfunction associated to $\lambda_{1}(B_g(R))$ and using the fact that the space of first eigenfunctions has dimension $1$.  With a slight abuse of notation, we can thus identify the first eigenfunction $v \in C^\infty(B_g(R))$ of $B_g(R)$ with the solution $v\colon [0,R] \ra \erre$ of
\begin{equation}\label{primaautof}
\left\{ \begin{array}{l}
v'' + (\kappa -1)\displaystyle \frac{g'}{g}v' + \lambda_{1}(B_g(R))v = 0 \qquad \text{on } (0,R), \\[0.3cm]
v(0)=1, \quad v'(0)=0, \quad v(R)=0, \quad v>0 \text{ on } [0,R).
\end{array}\right.
\end{equation}
Note that, multiplying the ODE by $g^{\kappa-1}$, integrating and using the initial condition, one can easily argue that $v'<0$ on $(0,R]$.
\vspace{2mm}

We will need the following technical lemma, which  extends a result due to Bessa-Costa, see \cite[Lemma 2.4]{BS}.
\begin{lemma}\label{wolverine}
Let $\mathbb{Q}^\kappa_g$ be a model manifold with radial sectional curvature $-G(r)$, and suppose that $g'>0$ on $[0,R)$. Let $v \in C^2(B_g(R))$ be a first positive eigenfunction of $B_g(R) \subset \mathbb{Q}^\kappa_g$. If
\begin{equation}\label{rough}
\lambda_{1}(B_g(R)) \ge \kappa\|G_-\|_{L^\infty([0,R])}.
\end{equation}
Then  the following inequality holds:
\begin{equation}\label{wolverine2}
\kappa\frac{g'(t)}{g(t)} v'(t)+
\lambda_{1}(B_{
g}(R))v(t) \le 0, \qquad t \in(0,R].
\end{equation}
\end{lemma}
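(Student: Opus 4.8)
The plan is to control the sign of the left-hand side of \eqref{wolverine2} directly, by showing that it satisfies a favourable first order linear ODE. Write $\lambda := \lambda_1(B_g(R))$, $h := g'/g$, and set
\[
\psi(t) := \kappa\, h(t)\, v'(t) + \lambda\, v(t), \qquad t \in (0,R],
\]
so that \eqref{wolverine2} is exactly the claim $\psi \le 0$ on $(0,R]$.

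First I would collect the structural identities. From the model equation $g'' = Gg$ together with $g(0)=0$, $g'(0)=1$, one gets $h' = g''/g - h^2 = G - h^2$; the radial first eigenfunction solves $v'' + (\kappa-1)hv' + \lambda v = 0$ on $(0,R)$ with $v(0)=1$, $v'(0)=0$, and $v' < 0$ on $(0,R]$ (as recalled just before the statement). Differentiating $\psi$, replacing $v''$ via the eigenvalue equation and $h'$ via the previous identity, the terms involving $h^2$ cancel and one is left with
\[
\psi' + \kappa\, h\, \psi = (\kappa G + \lambda)\, v' \qquad \text{on } (0,R].
\]
Multiplying by the integrating factor $g^\kappa$, and using $(g^\kappa)' = \kappa g^{\kappa-1}g' = \kappa h\, g^\kappa$, this becomes $\big(g^\kappa \psi\big)' = g^\kappa(\kappa G + \lambda)v'$.

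Now the hypothesis \eqref{rough} enters: since $G(t) \ge -G_-(t) \ge -\|G_-\|_{L^\infty([0,R])}$, assumption \eqref{rough} yields $\kappa G(t) + \lambda \ge \lambda - \kappa\|G_-\|_{L^\infty([0,R])} \ge 0$ for all $t\in[0,R]$; together with $v' < 0$ and $g^\kappa > 0$ on $(0,R]$, this forces $\big(g^\kappa\psi\big)' \le 0$, so $g^\kappa\psi$ is non-increasing on $(0,R]$. To close the argument I would examine the endpoint $t=0$: since $g(t)\sim t$ near $0$ and $v'$ vanishes at $0$ with $v$ of class $C^2$, the product $hv'$ converges to $v''(0)$ as $t\to 0^+$, so $\psi$ extends continuously to $0$; indeed, letting $t\to 0^+$ in the ODE for $v$ gives $\kappa v''(0)+\lambda = 0$, hence $\psi(0^+)=0$. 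In particular $g(t)^\kappa\psi(t)\to 0$ as $t\to0^+$, and monotonicity of $g^\kappa\psi$ then gives $g(t)^\kappa\psi(t)\le 0$ on $(0,R]$; since $g'>0$ on $[0,R)$ implies $g>0$ on $(0,R]$, we conclude $\psi\le 0$ there, which is \eqref{wolverine2}.

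The computation itself is routine; the point on which the argument hinges is the clean identity $\psi' + \kappa h\psi = (\kappa G + \lambda)v'$, which is precisely what turns \eqref{rough} into the sign of the inhomogeneous term. The only genuinely delicate step is the analysis at the singular endpoint $t=0$, where $h=g'/g$ blows up and one must verify that the boundary contribution $g^\kappa\psi$ vanishes there, so that the monotonicity of $g^\kappa\psi$ can be integrated from $0$.
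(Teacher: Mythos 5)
Your proof is correct. The key identity $\psi' + \kappa h\,\psi = (\kappa G+\lambda)v'$ for $\psi = \kappa h v' + \lambda v$, $h=g'/g$, checks out (the $h^2$-terms indeed cancel once $v''$ is eliminated via the eigenvalue equation and $h'=G-h^2$ via $g''=Gg$), the hypothesis \eqref{rough} gives exactly $\kappa G+\lambda\ge 0$, and your boundary analysis at $t=0$ ($hv'\to v''(0)=-\lambda/\kappa$, hence $\psi(0^+)=0$ and $g^\kappa\psi\to 0$) is sound. The route is genuinely different from the paper's, though the same ingredients power both. The paper sets up a Sturm-type comparison: it introduces an auxiliary positive function $\mu$ solving $\kappa\mu' g'/g+\lambda\mu=0$, observes that \eqref{wolverine2} is equivalent to the Wronskian inequality $\mu'v-v'\mu\ge 0$, and obtains the latter by computing $\big(g^{\kappa-1}(\mu'v-v'\mu)\big)'$ and integrating from $0$; the final sign there comes from the positivity of $\mu$ and $v$ together with $G+\lambda/\kappa\ge 0$. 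You instead write a first-order linear ODE for the target quantity $\psi$ itself and integrate with the factor $g^\kappa$; your final sign comes from $v'<0$ together with $\kappa G+\lambda\ge 0$. (The two signed quantities differ only by the positive weight $\mu/(\kappa g')$, so the arguments are parallel, but yours dispenses with $\mu$ and the equivalence step entirely.) What your version buys is brevity and a cleaner identification of where \eqref{rough} enters; what the paper's version buys is that it never needs the monotonicity $v'<0$ at the final stage, only the positivity of the eigenfunction. Both handle the singular endpoint $t=0$ by the same vanishing of the weighted quantity there.
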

\begin{proof}
For simplicity of notation, we denote by $\lambda=\lambda_{1}(B_{
g}(R))$. Multiplying \eqref{primaautof} by $g^{\kappa-1}$ we deduce that $v(t)$ satisfies the following differential equation:
\begin{equation}\label{eqSilvana2}
\left\{ \begin{array}{l}
\disp (g^{\kappa-1}v')' + \lambda g^{\kappa-1}v = 0 \qquad \text{on } (0,R), \\[0.2cm]
v(0)=1, \quad v'(0)=0, \quad v(R)=0, \quad v>0 \text{ on } [0,R).
\end{array}\right.
\end{equation}
Our aim is to deduce \eqref{wolverine2} via some modified Sturm-type arguments. In order to do so, we search for a positive function $\mu$ solving
\begin{align}
\label{eqmu}\kappa\mu'(t)\frac{g'(t)}{g(t)}+\lambda\mu(t)=0\,\,\textrm{ on} \,\,(0,R).
\end{align}
Integrating, we get that $\log \mu(t)=-\dfrac{\lambda}{\kappa}\displaystyle
\int_0^t\dfrac{g(s)}{g'(s)}\di s$, thus
\[ \mu(t)
=e^{\left(-\dfrac{\lambda}{\kappa}\displaystyle\int_0^t\dfrac{g(s)}{g'(s)}\di s\right)}.
\]
The above expression is well defined since $g'>0$ on $[0,R)$.

 Since
$
\mu'(t) = -\displaystyle\frac{\lambda}{\kappa}\dfrac{g(t)}{g'(t)}\mu(t)$
we deduce
\begin{align}\label{eqpacelli2}
\mu'(t)v(t)\!-v'(t)\mu(t)\!&= -\dfrac{\lambda}{\kappa}\dfrac{g(t)}{g'(t)} e^{\!\left(-\dfrac{\lambda}{\kappa}\displaystyle\int_0^t\!\dfrac{g(s)}{g'(s)}\di s\right)}\!v(t)\!-\!v'(t)e^{\!\left(-\dfrac{\lambda}{\kappa}\displaystyle\int_0^t\!\dfrac{g(s)}{g'(s)} \di s\right)}\nonumber\\\nonumber
\\
=&\displaystyle-\frac{1}{\kappa}\frac{g(t)}{g'(t)}e^{\left(-\dfrac{\lambda}{\kappa}\displaystyle\int_0^t\dfrac{g(s)}{g'(s)}\di s\right)}
\left( \displaystyle \kappa\frac{g'(t)}{g(t)}v'(t) + \lambda
v(t)\right).
\end{align}
From (\ref{eqpacelli2}) we see that $
\kappa\displaystyle\frac{g'(t)}{g(t)}v'(t) + \lambda v(t) \le 0
$ on $(0,R)$  if and only if
\[
\mu'(t)v(t)-v'(t)\mu(t) \ge 0 \qquad \text{on } (0,R),
\]
and we are going to prove this last inequality.


 Differentiating \eqref{eqmu} and multiplying by $(1/\kappa)$ both sides of the  equality, we have
\[\mu''(t)\frac{g'(t)}{g(t)}+\mu'(t)\left[G(t)-\left(\frac{g'(t)}{g(t)}\right)^2+\frac{\lambda}{\kappa}\right]=0,\]
that is,
\begin{align*}
\mu''(t)=-\mu'(t)\frac{g(t)}{g'(t)}\left[G(t)-\left(\frac{g'(t)}{g(t)}\right)^2+\frac{\lambda}{\kappa}\right].
\end{align*}
Since $\mu'(t)\dfrac{g(t)}{g'(t)}=-\dfrac{\lambda}{\kappa}\mu(t)\left(\dfrac{g(t)}{g'(t)}\right)^2$ we can rewrite $\mu''(t)$ in the following way:
\[\mu''(t)=\frac{\lambda}{\kappa}\mu(t)\left[G(t)\left(\frac{g(t)}{g'(t)}\right)^2-1+\frac{\lambda}{\kappa}\left(\frac{g(t)}{g'(t)}\right)^2\right].\]
Multiplying the above equation by $g^{\kappa-1}(t)$, and then adding and subtracting the term $(\kappa-1)g^{\kappa-2}(t)g'(t)\mu'(t)$, we obtain
\begin{equation}\label{eqSilvana4}
(g^{\kappa-1}\mu')'(t) =-\lambda g^{\kappa-1}(t)\mu(t)
\left[-\frac{G(t)}{\kappa}\left(\frac{g(t)}{g'(t)}\right)^2\! -\frac{\lambda}{\kappa^2}\left(\frac{g(t)}{g'(t)}\right)^{2}\!+1\right].
\end{equation}
Next, we multiply \eqref{eqSilvana4} by $v(t)$ and \eqref{eqSilvana2} by $-\mu(t)$, and we add them to get
\[
(g^{\kappa-1}\mu')'(t)v(t)-(g^{\kappa-1}v')'(t)\mu(t)=\frac{\lambda}{\kappa} g^{\kappa-1}(t)\mu(t) v(t)\left(\frac{g(t)}{g'(t)}\right)^2\left[G(t)+\frac{\lambda}{\kappa}
\right].
\]
Integrating from $0$ to $t$ gives
\begin{equation}
g^{\kappa-1}\left(\mu'v-v'\mu \right)(t) =\displaystyle
\int_{0}^{t}\frac{\lambda}{\kappa} g^{\kappa-1}(s) \left(\frac{g(s)}{g'(s)}\right)^2\left[G(s)+\frac{\lambda}{\kappa}
\right]
\mu(s)v(s) \di s.
\end{equation}
Now, from \eqref{rough} we deduce that
\[
\displaystyle\frac{\lambda}{\kappa} g^{\kappa-1}(t) \left(\frac{g(t)}{g'(t)}\right)^2\left[G(t)+\frac{\lambda}{\kappa}
\right]
\mu(t)v(t) \ge  0,\]
whence
$
\mu'(t)v(t)-v'(t)\mu(t) \ge 0$ for $t \in (0,R)$, as claimed.
\end{proof}
\begin{remark}\itshape{\label{rem_useful}
\emph{
It is important to find conditions to ensure \eqref{rough}. For instance, if  $-G(r) = B^2$, where $B$ is a positive constant,  then the solution $g_{_{B}}$ of \eqref{eqg} is
\begin{equation}\label{casosfera}
 g_{_{B}}(r)= B^{-1}\sin(Br), \qquad  \text{thus} \qquad  g_{_{B}}'>0 \quad \text{on } [0, \pi/(2B)).
\end{equation}The function  $g_{_{B}}$ yields the  model manifold $\mathbb{Q}_{g_{_{B}}}^{\kappa}=\mathbb{S}^{\kappa}(B^2)$, the $\kappa$-dimensional sphere of constant sectional curvature $B^{2}$ and diameter  ${\rm diam}_{\mathbb{S}^{\kappa}(B^2)}=\pi/B$. Note that the first eigenvalue of the geodesic ball of $\mathbb{S}^{\kappa}(B^2)$ of radius $R=\pi/2B$   is $\lambda_{1}(B_{\mathbb{S}^{\kappa}(B^2)}(\pi/2B))= \kappa B^2$ and   $v(r)=\cos(Br)$ is its first eigenfunction.}

\emph{ When   $-G(r) \le B^2$ and $R \le \pi/(2B)$, by Sturm's argument a solution  $g$ of \eqref{eqg} satisfies
$$
\frac{g'}{g} \ge  \frac{ g_{_{B}}'}{g_{_{B}}} >0 \quad \text{on } \left[0, \frac{\pi}{2B}\right).
$$
By  Cheng's Comparison Theorem  (version proved by Bessa-Montenegro in \cite{BeMo-cambridge}),}

\emph{$$\disp \lambda_{1}(B_g(R)) \ge \disp \lambda_{1}(B_{g_{_B}}(R)),\quad R\in \left[0, \frac{\pi}{2B}\right) .$$ }

\emph{\noindent  In order to get  $\disp \lambda_{1}(B_g(R)) \ge\kappa \Vert G_-\Vert_{L^\infty([0,R))}$ it is sufficient to have \begin{equation}\lambda_{1}(B_{g_{_B}}(R))=\lambda_{1}(B_{\mathbb{S}^{\kappa}(B^{2})}(R))\geq \kappa \Vert G_{-}\Vert_{L^\infty([0,R))}.\label{eqLambda}\end{equation}
On the other hand, we can see $  \kappa \Vert G_{-}\Vert_{L^\infty([0,R))}$ as a first eigenvalue of a ball of radius  $\tilde{R}$ in a $\kappa$-dimensional sphere of sectional curvature $\tilde{B}^{2}$\!, i.e.
$$ \kappa \Vert G_{-}\Vert_{L^\infty([0,R))} =\lambda_{1}(B_{\mathbb{S}^{\kappa}(\tilde{B}^{2})}(\tilde{R})),$$ where  $\tilde{R}=\frac{\pi }{2 \sqrt{\Vert G_{-}\Vert_{L^{\infty}([0, R))}}}$ and  $\tilde{B}^{2}=\Vert G_{-}\Vert_{L^{\infty}([0, R))}$. }
\vspace{1mm}

\noindent \emph{We conclude that the inequality \eqref{eqLambda} holds, thus $\disp \lambda_{1}(B_g(R)) \ge\kappa \Vert G_-\Vert_{L^\infty([0,R))}$, whenever   $$
R \le \frac{\pi}{2 \sqrt{\|G_-\|_{L^\infty([0, R))}}}\cdot
$$}}

\end{remark}
\begin{remark}
\emph{We remark that if
\[
t\int_t^\infty G_{-}(s)\di s\leq\frac{1}{4} \qquad \text{for every } t\in \erre^+,
\]
where $G_{-}(s)=\max\left\{0, -G(s)\right\}$, both $g$ and $g'$ are strictly positive on $\erre^+$. This criterion has been proved in  \cite[Prop. 1.21]{bmr2}.
}
\end{remark}
\noindent \textbf{A preliminary computation.}\\
From now on, we will consider the case when the ambient space is a warped product $W^{n+q}=N\times_{f}Q$
of two Riemannian manifolds $(N^n,\left\langle ,\right\rangle_N)$ and $(Q^q,\left\langle ,\right\rangle_Q)$, 
 with
the Riemannian metric on $W$  given by
$$
\left\langle \!\left\langle \,,\,\right\rangle\!\right\rangle=\pair{\,,\,}_N+f^2\pair{\,,\,}_Q
$$
for some smooth positive function $f\colon N \ra \erre^{+}$. We fix the index convention
\[1\leq j,k\leq n, \quad n+1\leq\alpha,\beta \leq n+q.\]
For $(p,q)\in W$, we choose a chart $(U,\psi)$ on $N$ around $p$, with coordinate tangent basis $\left\{\partial_j\right\}=\left\{\partial/\partial \psi_j\right\}$, and a chart $(V,\phi)$ on $Q$ around $q$, with basis $\left\{\partial_\alpha\right\}=\left\{\partial/\partial \phi_\alpha\right\}$. Then, with respect to the product chart $(U\times V,\psi\times\phi)$ around $(p,q)$, the Hessian of $F$ at $(p,q)$ has components
{\small\begin{equation}\label{hessF}
\left\{ \begin{array}{lcl}
\Hess_{W}
F(\partial_j,\partial_\kappa) & = & \disp \Hess_{N}
F(\partial_j,\partial_\kappa),\\[0.2cm]
\Hess_{W} F(\partial_j,\partial_\alpha) & = & \disp \partial_j\partial_\alpha F-\frac{1}{f}\partial_j f\partial_\alpha F,\\[0.2cm]
\Hess_{W} F(\partial_\alpha,\partial_\beta) & = & \disp \Hess_Q
F(\partial_\alpha,\partial_\beta)+\frac{1}{f}\left\langle \nabla^N f, \nabla F\right\rangle_N\left\langle \!\left\langle \partial_\alpha, \partial_\beta\right\rangle\!\right\rangle,
\end{array}
\right.
\end{equation}}
\noindent where {\small $\Hess_N \!F$} and {\small $\Hess_Q \!F$} mean  respectively {\small$\Hess(F\circ i_N)$} and {\small$\Hess(F\circ i_Q)$} and the inclusions are given by
$$
\begin{array}{ll}
i_N \colon (N,\left\langle ,\right\rangle_N)\rightarrow N\times_f\left\{q\right\}\subseteq N\times_fQ, & \quad x \mapsto (x,q), \\[0.1cm]
i_Q \colon (Q,\left\langle ,\right\rangle_Q)\rightarrow \left\{p\right\}\times_f Q\subseteq N\times_fQ, & \quad y \mapsto (p,y).
\end{array}
$$
From \eqref{hessF} we observe that if $F(p,q)=f(p)\cdot h(q)$, where $f$ is the warping function and $h:Q\rightarrow\mathbb{R}$ is a smooth function on $Q$, then $\Hess_{W} F$ has a block structure, that is
\[\Hess_{W} F(X,Z)=0 \quad \forall X\in T_{(p,q)}\big(N\times_f\left\{q\right\}\big), \,Z\in T_{(p,q)}\big(\left\{p\right\}\times_f Q\big).\]
More precisely, we have the 
following result.
\begin{lemma}\label{prop1}
Let $F \in C^\infty(N\times_f Q)$ be given by $F(p,q)=f(p)\cdot h(q)$, where $h \in C^\infty(Q)$. 
Then
\begin{align}\label{hesswp}
\left\{\begin{array}{lll}
\Hess_{W}
F(X,Y)
=h\Hess_N
f(X,Y),
\\[0.2cm]
\Hess_{W}
F(X,Z)=0, \\
\Hess_{W}
F(Z,W)=f\Hess_Q
h(Z,W)+h\dfrac{\left|\nabla^N f\right|_N^2}{f}\left\langle \!\left\langle Z,W\right\rangle\!\right\rangle,
\end{array} \right.
 \end{align}
for every $X,Y\in T_{(p,q)}\big(N\times_f\{q\}\big)$ and $Z,W\in T_{(p,q)}\big(\{p\}\times_f Q\big)$.
  \end{lemma}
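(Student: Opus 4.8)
The plan is to derive \eqref{hesswp} directly from the general component formulae \eqref{hessF}, specialized to the product function $F(p,q)=f(p)h(q)$, and then to pass from coordinate vector fields to arbitrary tangent vectors by $C^\infty$-bilinearity of the Hessian.

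First I would record how $F$ restricts to the two families of leaves. Along $N\times_f\{q\}$ the second variable is frozen, so $F\circ i_N=h(q)\cdot f$ is a constant multiple of the warping function; hence $\Hess_N F=h\,\Hess_N f$, and the $N$-gradient of $F$ is $h\,\nabla^N f$. Symmetrically, along $\{p\}\times_f Q$ one has $F\circ i_Q=f(p)\cdot h$, so $\Hess_Q F=f\,\Hess_Q h$. Plugging $\Hess_N F=h\,\Hess_N f$ into the first line of \eqref{hessF} gives $\Hess_W F(\partial_j,\partial_k)=h\,\Hess_N f(\partial_j,\partial_k)$, which, after extending bilinearly over $X,Y$ in the span of the $\partial_j$, is the first identity in \eqref{hesswp}.

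Next I would handle the mixed block. From $F=f(p)h(q)$ one has $\partial_\alpha F=f\,\partial_\alpha h$ and $\partial_j\partial_\alpha F=\partial_j f\,\partial_\alpha h$, so the second line of \eqref{hessF} becomes
\[
\Hess_W F(\partial_j,\partial_\alpha)=\partial_j f\,\partial_\alpha h-\frac{1}{f}\,\partial_j f\,(f\,\partial_\alpha h)=0 .
\]
Since $\{\partial_j\}$ spans $T_{(p,q)}\big(N\times_f\{q\}\big)$ and $\{\partial_\alpha\}$ spans $T_{(p,q)}\big(\{p\}\times_f Q\big)$, tensoriality of $\Hess_W F$ yields $\Hess_W F(X,Z)=0$ for all such $X,Z$, the second identity in \eqref{hesswp}. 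For the $Q$–$Q$ block I would substitute $\Hess_Q F=f\,\Hess_Q h$ together with $\langle\nabla^N f,\nabla F\rangle_N=\langle\nabla^N f,\,h\,\nabla^N f\rangle_N=h\,|\nabla^N f|_N^2$ into the third line of \eqref{hessF}, getting $\Hess_W F(\partial_\alpha,\partial_\beta)=f\,\Hess_Q h(\partial_\alpha,\partial_\beta)+h\,\dfrac{|\nabla^N f|_N^2}{f}\,\langle\!\langle\partial_\alpha,\partial_\beta\rangle\!\rangle$, and conclude once more by bilinearity.

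The argument is entirely routine; there is no real obstacle. The only points that deserve a moment's care are the correct interpretation of the symbols in \eqref{hessF} — namely that $\Hess_N F$, $\Hess_Q F$ denote the intrinsic Hessians of the restrictions $F\circ i_N$, $F\circ i_Q$, and that $\nabla F$ in the last line contributes only through its $N$-component, which for a product function $F=f\cdot h$ equals $h\,\nabla^N f$ — and the cancellation in the mixed block that produces the announced block-diagonal structure.
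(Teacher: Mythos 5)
Your proof is correct and follows exactly the route the paper intends: the lemma is stated as an immediate specialization of the component formulae \eqref{hessF} to $F=f\cdot h$ (the paper gives no separate proof), and your substitutions $\Hess_N F=h\,\Hess_N f$, $\partial_\alpha F=f\,\partial_\alpha h$, and $\langle\nabla^N f,\nabla F\rangle_N=h\,|\nabla^N f|_N^2$, followed by bilinearity, are precisely the required verification.
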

\section{Main results}
Let $\varphi\colon M^m\rightarrow N^n\times_f Q^q$, $m>n$, be a minimal immersion. Hereafter, we shall require the following
\begin{assunz}\label{assumpt}\emph{Define $\rho_{_Q}(x)={\rm dist}_{Q}(o,x)$ and suppose that the radial sectional curvature of $Q$ satisfies
\[K^\rad_{Q}\leq-G(\rho_{_Q}),\,\,\,\textrm{where}\,\, G\in C^{\infty}(\mathbb{R}_{0}^{+}).
\]
We assume that the solution $g$ of \eqref{eqg} is positive and $g'>0$ on $[0,R)$, and that $B_Q(o,R)\subseteq Q\backslash \mathrm{cut}(o)$.}
\end{assunz}
Let $v:\overline{B_{g}(R) }\rightarrow \mathbb{R}$ be the first eigenfunction of the ball $B_g(R) \subset \mathbb{Q}^{m-n}_g$. As remarked, $v>0$ on $B_g(R)$, $v$ is  radial and (up to normalization) solves
\begin{equation}\label{sil01}\left\{\begin{array}{ll}
v''(t) + (m-n-1)\dfrac{g'(t)}{g(t)} v'(t) +
\lambda_{1}(B_g(R)) v(t) = 0, \;\;\; t\in
(0,R)\\[0.3cm]
v(0)=1, \quad v(R)=0, \quad v>0\textrm{ on }[0,R), \quad v'<0 \text{ on }(0,R].
\end{array}\right.\\
\end{equation}
Observe that, when $m=n+1$, the equation simply becomes $$v''(t) + \lambda_{1}(B_g(R))v(t)=0.$$


  \begin{theorem}\label{thm2}
  Let $\varphi : \!M^m\rightarrow N^n\times_f Q^q $ be an $m$-dimensional submanifold minimally immersed into $N^n\times_f Q^q$, where $Q$ satisfies Assumption \ref{assumpt} and $m>n$. Suppose that
  the warping function $f$ satisfies
  \begin{eqnarray}\label{condf}
  \Hess_N f(\cdot,\cdot)-\frac{\left|\nabla^N f\right|_N^2}{f}\left\langle ,\right\rangle_N\leq0.\end{eqnarray}
Let $U\subseteq N$ be an open subset, and let
$\OM \subset \varphi^{-1}(U\times_f B_Q(o,R))$ be a connected component. Then, if $R$ is such that
\begin{equation}\label{buono}
R \le \frac{ \pi}{2 \sqrt{\|G_-\|_{L^\infty([0,R))}}}
\end{equation}
the following estimate holds:
\begin{equation}\label{estwp}
\lambda^*(\Omega)\! \geq \! \inf_{p \in U} \left(\frac{\lambda_{1}(B_g(R)) - m\,\vert \nabla^N f\vert_N^2(p)}{\vert f(p)\vert^2}\right),
\end{equation}
where $B_g(R)$ is the geodesic ball of radius $R$ in the model manifold $\mathbb{Q}_g^{m-n}$ or the interval $[-R,R]$ if $m=n+1$.
  \end{theorem}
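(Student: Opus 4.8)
The plan is to apply Barta's Theorem (Theorem \ref{thmBM1}) with a cleverly chosen positive test function on $\OM$, namely $\Fi = v \circ \rho_{_Q} \circ \varphi$, where $v$ is the first eigenfunction of the model ball $B_g(R)$ appearing in \eqref{sil01}, and $\rho_{_Q}$ is the distance to $o$ in $Q$. Since $\OM \subset \varphi^{-1}(U \times_f B_Q(o,R))$ and $B_Q(o,R) \subset Q \setminus \mathrm{cut}(o)$, this function is smooth and strictly positive on $\OM$ (using $v>0$ on $[0,R)$ and $v(0)=1$), so it is a legitimate Barta test function. The conclusion \eqref{estwp} will follow once I show that
\[
-\frac{\Delta_M \Fi}{\Fi} \ge \frac{\lambda_1(B_g(R)) - m|\nabla^N f|_N^2(p)}{|f(p)|^2}
\]
at every point, with $p$ the $N$-component of $\varphi$.

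The key computation is to evaluate $\Delta_M \Fi$ using formula \eqref{eqBF3}: since $\varphi$ is minimal, $H=0$, so $\Delta_M(F\circ\varphi)(x) = \sum_{i=1}^m \Hess_W F(\varphi(x))(e_i,e_i)$ for any orthonormal basis $\{e_i\}$ of $T_xM$. Here I take $F(p',q') = f(p')\cdot v(\rho_{_Q}(q'))$, which is exactly of the product form $f\cdot h$ with $h = v\circ\rho_{_Q}$ covered by Lemma \ref{prop1}. By \eqref{hesswp}, $\Hess_W F$ has the block structure $\Hess_W F(X,Y) = h\,\Hess_N f(X,Y)$ on the $N$-directions and $\Hess_W F(Z,W) = f\,\Hess_Q h(Z,W) + h\frac{|\nabla^N f|_N^2}{f}\langle\!\langle Z,W\rangle\!\rangle$ on the $Q$-directions, with mixed terms vanishing. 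Decomposing each $e_i = e_i^N + e_i^Q$ into its components tangent to the two factors, and writing $a_i = |e_i^N|^2$, $b_i = |e_i^Q|^2$ (so $a_i + b_i = 1$ and $\sum_i a_i \le n$ since the $N$-factor has dimension $n$, while $\sum_i b_i = m - \sum_i a_i \ge m-n$), I get a trace of the form
\[
\Delta_M \Fi = h\sum_i \Hess_N f(e_i^N,e_i^N) + f\sum_i \Hess_Q h(e_i^Q,e_i^Q) + \frac{h}{f}|\nabla^N f|_N^2 \sum_i b_i.
\]
Then I must estimate $\Hess_Q h$ for $h = v\circ\rho_{_Q}$ via the chain rule: $\Hess_Q h = v''(\rho_{_Q})\,d\rho_{_Q}\otimes d\rho_{_Q} + v'(\rho_{_Q})\Hess_Q \rho_{_Q}$. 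Since $v' < 0$ on $(0,R]$ and, by the Hessian Comparison Theorem \ref{hessiancomparison} together with Assumption \ref{assumpt}, $\Hess_Q \rho_{_Q} \ge \frac{g'}{g}(\langle\,,\,\rangle_Q - d\rho_{_Q}\otimes d\rho_{_Q})$, I obtain the pointwise bound
\[
\Hess_Q h(Z,Z) \le v''(\rho_{_Q})\langle d\rho_{_Q},Z\rangle^2 + v'(\rho_{_Q})\frac{g'}{g}\big(|Z|^2 - \langle d\rho_{_Q},Z\rangle^2\big),
\]
valid because $v' \frac{g'}{g} \le 0$. Summing over $e_i^Q$ and using $v'' = -(m-n-1)\frac{g'}{g}v' - \lambda_1 v$ from \eqref{sil01}, the radial part telescopes; the crucial point is that the number of effective "angular" directions is controlled so that, after using $\lambda_1(B_g(R)) \ge (m-n)\|G_-\|_{L^\infty}$ — which holds by Remark \ref{rem_useful} precisely because of hypothesis \eqref{buono} — Lemma \ref{wolverine} (applied with $\kappa = m-n$) gives $(m-n)\frac{g'}{g}v' + \lambda_1 v \le 0$, letting me bound the $Q$-trace from above by $-\lambda_1 v \cdot(\text{something} \le$ the relevant sum of $b_i)$ plus controlled error. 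Combining with hypothesis \eqref{condf}, which makes $h\sum_i\Hess_N f(e_i^N,e_i^N) + \frac{h}{f}|\nabla^N f|_N^2\sum_i a_i \le 0$ and leaves $\frac{h}{f}|\nabla^N f|_N^2\sum_i(b_i - a_i)$...

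The main obstacle — and the step I would spend the most care on — is the bookkeeping of the coefficients $a_i, b_i$ when the tangent space of $M$ is \emph{not} adapted to the product splitting: one cannot choose $\{e_i\}$ simultaneously orthonormal and split, so the traces of $\Hess_N f$ and of $\Hess_Q h$ get "weighted" by the $a_i, b_i$, and one must verify that all the sign conditions \eqref{condf}, \eqref{rough}, and the negativity $v' \le 0$ conspire so that the weights never spoil the inequality. Concretely, after assembling everything I expect to reach
\[
\Delta_M \Fi \le h\Big(\sum_i b_i\Big)\Big(-\lambda_1 v \cdot \tfrac{1}{?}\Big) + \frac{h}{f}|\nabla^N f|_N^2\Big(\sum_i b_i\Big) + (\text{nonpositive } N\text{-terms}),
\]
and the inequality \eqref{estwp}, with its denominator $|f(p)|^2$ and its numerator $\lambda_1(B_g(R)) - m|\nabla^N f|_N^2(p)$, should pop out after dividing by $\Fi = f h$ and using $0 \le \sum_i b_i \le m$ (and $\ge m-n$) to replace the variable weights by $m$ in the error term — this is where the factor $m$ (rather than $q$ or $m-n$) in \eqref{estwp} originates. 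I would double-check the borderline case $m = n+1$ separately, where $\mathbb{Q}_g^{m-n} = \mathbb{Q}_g^1$ degenerates to an interval, $v'' + \lambda_1 v = 0$, and all the angular terms disappear, making the argument transparent and confirming the general formula.
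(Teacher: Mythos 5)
Your proposal follows essentially the same route as the paper's proof: Barta's theorem applied to $F\circ\varphi$ with $F=f\cdot(v\circ\rho_{_Q})$ (your opening line drops the factor $f$ from the test function, but your actual computation correctly uses $F=f\,h$), the block Hessian formula of Lemma \ref{prop1}, the Hessian Comparison Theorem together with $v'\le 0$, the radial ODE \eqref{sil01}, and Lemma \ref{wolverine} via Remark \ref{rem_useful} to handle the terms produced by a frame not adapted to the splitting. The bookkeeping you single out as the delicate step is resolved in the paper by exactly the three facts you list ($\sum_{i,j}(a_i^j)^2\le n$, the nonnegativity of $1-\sum_i b_i^2$, and the sign of $v'$), so the argument closes as you anticipate.
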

\begin{proof}We start defining
 $F\colon U\times_f B_{Q}(o, R)\rightarrow \mathbb{R}$ by $F(p,q)= f(p) \cdot h(q)$,  where  $h \in C^\infty(B_{Q}(o,R))$ is given by $h(q)=(v\circ \rho_{_Q})(q)$ and $v \in C^\infty([0,R])$ is the solution of \eqref{sil01}. By Theorem \ref{thmBM1}, we have  that \
\begin{equation}\label{basebarta}
\lambda^{\ast}(\OM)\geq \inf_{\OM}\left(-\displaystyle\frac{\Delta (F\circ\varphi)}{F\circ\varphi}\right).
\end{equation}
We are going to give a lower bound for $-\Delta (F\circ\varphi)/(F\circ\varphi)$.  Let $x\in \OM$ and let
 $ \{e_{1},\ldots, e_{m}\}$ be an orthonormal basis
for $T_{x}\OM$. 
 Let $\varphi (x)=(p(x),q(x))$, $t(x)=\rho_{_Q}(q(x))$  and denote by $P_{_N}:T_{_{(p,q)}}\big(N\times_f Q\big)\rightarrow T_{_{(p,q)}}\big(N\times_f \left\{q\right\}\big)$ and $P_{_Q}:T_{_{(p,q)}}\big(N\times_f Q\big)\rightarrow T_{_{(p,q)}}\big(\left\{p\right\}\times_fQ \big)$ the orthogonal projections onto the tangent spaces of the two fibers. Then, by  \eqref{eqBF3} and the minimality of $M$, the Laplacian of $F\circ\varphi$ at $x$ has the expression
{\small \begin{align*}
\Delta\,(F\circ\varphi )(x)&=\!
\sum_{i=1}^{m}\Hess_{W}
F (\varphi
(x))(e_i,e_i)\nonumber\\&=\sum_{i=1}^{m}\Big[\Hess_{W}
F (\varphi
(x))(P_{_N}\!e_i,P_{_N}\!e_i)+\Hess_{W}
F (\varphi
(x))(P_{_Q}\!e_i,P_{_Q}\!e_i)\Big]
\end{align*}}where $W=N\times_f Q$.
Using Lemma \ref{prop1}, we deduce and writing  $t=t(x)$ for simplicity of notation,
{\small \begin{align}\label{eqLaplacian}
\Delta\,(F\circ\varphi )(x)&=v(t)\sum_{i=1}^{m}\Hess_{N}
f(P_{_N}\!e_{i},P_{_N}\!e_{i})(p)+f(p)\sum_{i=1}^{m}\Hess_{Q}
v(t)(P_{_Q}\!e_{i}, P_{_Q}\! e_{i})\nonumber\\&\quad+v(t)\frac{\left|\nabla^Nf\right|_N^2}{f}(p)\sum_{i=1}^{m}\left\langle\!\left\langle P_{_Q}\!  e_i,P_{_Q}\! e_i\right\rangle\!\right\rangle.
\end{align}}
Let $\{E_1, \ldots,E_n\}$ be an orthonormal basis for $T_{p}N$, and consider the tangent  basis $\big\{ \partial/\partial \rho_{_Q},\left\{ \partial/\partial\theta^{\gamma}\right\}_{\gamma=n+2}^{n+q}\big\}$,    associated to normal coordinates at $Q$. Then the set $\left\{\xi_{l}\right\}_{l=1}^{n+q}$ 
given by\[\xi_{j}=E_j\;\;\forall j=1,\ldots,n,\;\;\;\xi_{n+1}=\frac{1}{f}\frac{\partial}{\partial \rho_{_Q}}, \quad\xi_{\gamma}=\frac{1}{f}\frac{\partial}{\partial\theta^{\gamma}}\;\;\forall\gamma=n+2,\ldots,n+q\]
is an orthonormal basis of $T_{(p,q)}\big(N\times_fQ\big)$. So, we can write $e_i$ as a linear combination of vectors of this basis in the following way:
\[e_{i}=\sum_{j=1}^{n}a_{i}^j\cdot \xi_j+b_{i}\cdot\xi_{n+1} +\sum_{\gamma=n+2}^{n+q}c_{i}^{\gamma}\cdot\xi_{\gamma},\]
for constants $a_{i}^j, b_{i}, c_{i}^{\gamma}$ satisfying
\begin{equation}\label{eqcons}
\sum_{j=1}^{n} (a_{i}^j)^{2}+b_{i}^{2}+\sum_{\gamma=n+2}^{n+q}(c_{i}^{\gamma})^{2}=1, \,\,\forall i=1,\ldots,m .
\end{equation}
%
From
$$
\nabla^Qv(t) = v'(t) \frac{\partial}{\partial \rho_{_Q}}, \quad \Hess_Q v(t) = v'(t) \Hess_Q \rho_{_Q} + v''(t) \di \rho_{_Q} \otimes \di \rho_{_Q},
$$
we can rewrite \eqref{eqLaplacian} in the following way:
{\small \begin{align*}
\Delta (F\circ\varphi )(x)=&v(t)\sum_{i=1}^{m}\Hess_N f(P_{_N}\! e_i,P_{_N}\!e_i)(p)+f(p) \sum_{i=1}^{m}\Big[P_{_Q}\!e_{i}(v'(t))\left\langle  \frac{\partial}{\partial\rho_{_Q}} , P_{_Q}\!e_{i}\right\rangle_Q \nonumber\\&+ v'(t)\Hess_{_Q}
\rho_{_Q}(P_{_Q}\!e_{i},P_{_Q}\!e_{i})\Big]+v(t)\frac{\left|\nabla^N f\right|_N^2}{f}(p)\sum_{i=1}^{m}\left\langle\!\left\langle  P_{_Q} \!e_i,P_{_Q} \!e_i\right\rangle\!\right\rangle\nonumber \\
=&v(t)\sum_{i=1}^{m}\left(\Hess_{N} f(P_{_N} \!e_i,P_{_N}\!e_i)+\frac{\left|\nabla^N f\right|_N^2}{f}\Big(1-\left\langle\!\left\langle  P_{_N}\! e_i,P_{_N}\!e_i\right\rangle\!\right\rangle
\Big)\right)(p)\\&+\frac{1}{f(p)}\left(\!v''(t)\sum_{i=1}^{m}b_{i}^{2}+v'(t)\sum_{i=1}^{m}\sum_{\gamma=n+2}^{n+q}(c_{i}^{\gamma})^{2}\Hess_{_Q} \rho_{_Q}\left(\frac{\partial}{\partial\theta^{\gamma}},\frac{\partial}{\partial\theta^{\gamma}}\right)\!\right).
\end{align*}}
Using \eqref{condf} and the fact that $v$ is positive we have
\begin{eqnarray}
-\Delta (F\circ\varphi )(x)&\geq & -\,\frac{1}{f(p)}\Big[mv(t)\vert \nabla^N f\vert_N^2(p) +v''(t)\sum_{i=1}^{m}b_{i}^{2} \nonumber \\ &&+\,v'(t)\sum_{i=1}^{m}\sum_{\gamma=n+2}^{n+q}(c_{i}^{\gamma})^{2}
\Hess_{_Q} \rho_{_Q}\left(\frac{\partial}{\partial\theta^{\gamma}},\frac{\partial}{\partial\theta^{\gamma}}\right)\Big].\nonumber
\end{eqnarray}
Since $v'(t)\leq 0$, we can apply the Hessian Comparison Theorem, to obtain
{\small\begin{eqnarray*}
-\Delta (F\circ\varphi )(x)&\geq & -\frac{1}{f(p)}\Big[mv(t)\left|\nabla^N f\right|_N^2(p)+v''(t)\sum_{i=1}^{m}b_{i}^{2}\\ && +
v'(t)\frac{g'(t)}{g(t)}\sum_{i=1}^{m}\sum_{\gamma=n+2}^{n+q}(c_{i}^{\gamma})^{2}\Big]\nonumber\\
&=&-\frac{1}{f(p)}\Big[v''(t)\sum_{i=1}^{m}b_{i}^{2}+
v'(t)\frac{g'(t)}{g(t)}\Big(m-\sum_{i=1}^{m}\sum_{j=1}^{n}(a_{i}^j)^{2}-\sum_{i=1}^{m}b_{i}^{2}\Big)\\ && +\,mv(t)\left|\nabla^N  f\right|_N^2(p)\Big]
\end{eqnarray*}}
where the last equality follows by an algebraic manipulation that uses \eqref{eqcons} summed for $i=1,\ldots,m$. Now, by a simple rearranging,
{\small\begin{align*}
-\Delta (F\circ\varphi )(x)&\geq-\frac{1}{f(p)}\Big[v''(t)+(m-n-1)v'(t)\frac{g'(t)}{g(t)}-v''(t)\left(1\!-\!\sum_{i=1}^{m}b_{i}^{2}\right)
\nonumber\\&\qquad+\!v'(t)\frac{g'(t)}{g(t)}\left(\!n\!-\!\sum_{i=1}^{m}\sum_{j=1}^{n}(a_{i}^j)^{2}\!+\!1-\!\sum_{i=1}^{m}b_{i}^{2}\right)\!+\!mv(t)\left|\nabla^N f\right|_N^2(p)\Big].
\end{align*}}
From \eqref{sil01} we get
{\small\begin{align}\label{eqSilvana9}
-\Delta (F\circ\varphi) (x)\!&\geq\frac{v(t)}{f(p)}\left(\lambda_{1} (B_{
g}(R))-\!m\left|\nabla^N f\right|_N^2(p)\right)\nonumber\\&\,+\!\frac{1}{f(p)}\left[v''(t)\!\left(1\!-\!\sum_{i=1}^{m}b_{i}^{2}\right)
\!-\!v'(t)\frac{g'(t)}{g(t)}\!\left(\!n\!-\!\sum_{i=1}^{m}\sum_{j=1}^{n}(a_{i}^j)^{2}\!+\!1-\!\sum_{i=1}^{m}b_{i}^{2}\right)\right].
\end{align}}
\noindent We claim that the last line of (\ref{eqSilvana9}) is nonnegative, that is,
\begin{equation} \label{eqSilvana10}
v''(t)\left(1-\sum_{i=1}^{m}b_{i}^{2}\right)
-v'(t)\displaystyle\frac{g'(t)}{g(t)}\left(n-\sum_{i=1}^{m}\sum_{j=1}^{n}(a_{i}^j)^{2}+1-\sum_{i=1}^{m}b_{i}^{2}\right)\geq 0.
\end{equation}
To prove this, we substitute $v''(t)=-(m-n-1)v'(t)\displaystyle\frac{g'(t)}{g(t)}-\lambda_{1} (B_{
g}(R))v(t)$ in  (\ref{eqSilvana10}) to get
 \begin{eqnarray}
v''(t)\left(1-\sum_{i=1}^{m}b_{i}^{2}\right)
-v'(t)\displaystyle\frac{g'(t)}{g(t)}\left(n-\sum_{i=1}^{m}\sum_{j=1}^{n}(a_{i}^j)^{2}+1-\sum_{i=1}^{m}b_{i}^{2}\right)&=& \nonumber \\
&&\nonumber \\
-\left((m-n)v'(t)\displaystyle\frac{g'(t)}{g(t)}+
\lambda_{1} (B_{
g}(R))v(t)\right)\left(1-\sum_{i=1}^{m}b_{i}^{2}\right)&& \\
&&\nonumber \\
-v'(t)\displaystyle\frac{g'(t)}{g(t)}\left(n-\sum_{i=1}^{m}\sum_{j=1}^{n}(a_{i}^j)^{2}\right), & & \nonumber
 \end{eqnarray}
so that \eqref{eqSilvana10} is equivalent to show that
\begin{eqnarray}\label{eqIgualdade}
&&\nonumber \\
-\left((m-n)v'(t)\displaystyle\frac{g'(t)}{g(t)}+
\lambda_{1} (B_{
g}(R))v(t)\right)\left(1-\sum_{i=1}^{m}b_{i}^{2}\right)&& \\
&&\nonumber \\
-v'(t)\displaystyle\frac{g'(t)}{g(t)}\left(n-\sum_{i=1}^{m}\sum_{j=1}^{n}(a_{i}^j)^{2}\right)&\geq &0.\nonumber
 \end{eqnarray}
Now, in our assumption \eqref{buono}, by Remark \ref{rem_useful} it holds
$$
\lambda_{1}(B_g(R)) \ge (m-n)\|G_-\|_{L^\infty([0,R])}.
$$
Hence, applying Lemma \ref{wolverine} we infer that
$$
(m-n)v'(t)\displaystyle\frac{g'(t)}{g(t)}+
\lambda_{1} (B_{
g}(R))v(t)\le 0.
$$
Moreover, it is clear that $\left(1-\sum_{i=1}^{m}b_{i}^{2}\right)\geq 0$, and  finally we observe the inequality \[\sum_{i=1}^{m}\sum_{j=1}^{n}(a_{i}^j)^{2}=\sum_{j=1}^{n}\left(\sum_{i=1}^{m}\left\langle\! \left\langle e_i,\xi_j\right\rangle\!\right\rangle\right)=\sum_{j=1}^{n}\left|P_{_M}\xi_j\right|^2\leq\sum_{j=1}^{n}\left|\xi_j\right|^2=\sum_{j=1}^{n}1=n,\]
where $P_{_M}$ is the projection on $M$.%

Keeping in mind that $v'\le 0$, this concludes the proof of the claimed \eqref{eqIgualdade}. From (\ref{eqSilvana9}) we have
\begin{equation}\label{bgr}
\displaystyle -\frac{\Delta (F\circ\varphi) }{F\circ\varphi}(x)\geq\frac{1}{f^2(p)} \left(\lambda_{1} (B_{
g}(R))-m\left|\nabla^N f\right|_N^2(p)\right).
\end{equation}
Therefore, by \eqref{basebarta} we conclude the desired \eqref{estwp}.
\end{proof}
\begin{remark}
\emph{In the case $N=\mathbb{R}$, we observe that the mean curvature function of the fibers $\left\{p\right\}\times_fQ$ is given by $\mathcal{H}(y)={f'(y)}/{f(y)}$. Therefore, condition \eqref{condf} is equivalent to $f\mathcal{H}'\leq0$, that is, $\mathcal{H}'\leq0$. There exists a large class of functions for which $\mathcal{H}' \le 0$. For instance, $f(y) = \mathrm{constant}$, $f(y)=y$ and $f(y)= e^{cy}$, where $c \in \erre$.
}
\end{remark}
\section{Applications}
To show the generality of Theorem \ref{thm2}, we conclude this paper with a number of different examples, and we discuss the sharpness of the estimates produced.\vspace{-2mm}

\subsection{Cylinders}

Considering $f=1$ and $N=\erre$ in Theorem \ref{thm2} we obtain a generalized version of Theorem 1.1 of  \cite{BS}.
  \begin{corollary}\label{thm2}Let $\varphi \colon \!M^m\rightarrow \mathbb{R}\times Q^q$ be an  $m$-dimensional   submanifold minimally immersed into $\mathbb{R}\times Q^q$. Suppose that  $Q$ satisfies the Assumption \ref{assumpt}.
Let 
  $\OM \subset \varphi^{-1}(\mathbb{R}\times B_Q(o,R))$ be a connected component with $$R\leq \frac{\pi }{2\sqrt{\Vert G_{-}\Vert_{L^{\infty}([0,R))}}} \cdot$$
 Then
$$
\lambda^*(\Omega) \geq \lambda_{1}(B_{
g}(R)).
 $$
 Here $B_g(R)$ is a geodesic ball  of radius $R$ in an $(m-1)$-dimensional model manifold $\mathbb{Q}_g^{m-1}$.
  \end{corollary}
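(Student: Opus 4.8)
The plan is to derive the statement directly from Theorem \ref{thm2} by specializing the ambient warped product. First I would take $N=\erre$ with its standard metric, so $n=1$, and I would choose the warping function $f\equiv 1$; then $N\times_f Q=\erre\times Q$ is exactly the Riemannian product appearing in the corollary, and the hypothesis $m>n$ of Theorem \ref{thm2} becomes $m\geq 2$, which is implicit in speaking of an $m$-dimensional submanifold with $m>1$.

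Next I would verify that the remaining hypotheses of Theorem \ref{thm2} are in force. The Assumption \ref{assumpt} on $Q$ is given; the curvature restriction \eqref{buono} on $R$ is precisely the bound $R\leq \pi/(2\sqrt{\|G_-\|_{L^\infty([0,R))}})$ assumed here; and the structural condition \eqref{condf} on $f$ holds trivially because $f$ is constant: indeed $\nabla^N f\equiv 0$ and $\Hess_N f\equiv 0$, so
$$
\Hess_N f(\cdot,\cdot)-\frac{|\nabla^N f|_N^2}{f}\langle\,,\rangle_N \equiv 0 \leq 0 .
$$
Taking $U=N=\erre$, any connected component $\Omega\subset\varphi^{-1}(\erre\times B_Q(o,R))$ is an admissible domain for Theorem \ref{thm2}.

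Finally I would read off the conclusion \eqref{estwp}. Since $|\nabla^N f|_N^2\equiv 0$ and $|f|\equiv 1$, the correction term and the denominator both trivialize, and the infimum over $p\in\erre$ collapses to
$$
\lambda^*(\Omega)\geq \inf_{p\in\erre}\left(\frac{\lambda_1(B_g(R))-m\cdot 0}{1}\right)=\lambda_1(B_g(R)),
$$
with $B_g(R)$ the geodesic ball of radius $R$ in the $(m-1)$-dimensional model $\mathbb{Q}_g^{m-1}$ (and, in the borderline case $m=2$, the interval $[-R,R]$, whose first Dirichlet eigenvalue is $\pi^2/(4R^2)$). There is no real obstacle in this argument: the only point worth highlighting is that the vanishing of $\nabla^N f$ is exactly what removes the term $-m|\nabla^N f|_N^2$ and normalizes $|f|^2$ to $1$, so that the general estimate of Theorem \ref{thm2} sharpens to the clean lower bound $\lambda_1(B_g(R))$, independent of the position along the $\erre$-factor.
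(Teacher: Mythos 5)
Your proposal is correct and is exactly the paper's argument: the paper obtains this corollary by specializing Theorem \ref{thm2} to $N=\erre$ and $f\equiv 1$, under which \eqref{condf} holds trivially and \eqref{estwp} reduces to $\lambda^*(\Omega)\geq\lambda_1(B_g(R))$. Your verification of the hypotheses and the simplification of the right-hand side match the intended derivation.
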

%
%
%

In particular, when $Q^q=\mathbb{R}^q$ in the last corollary we get the following result in the Euclidean space proved by Bessa and Costa in \cite{BS}.
\begin{corollary}\label{euclidean}Let $\varphi\colon M^m\rightarrow \,\mathbb{R}^{q+1}$ be an $m$-dimensional submanifold minimally immersed into $\mathbb{R}^{q+1}.$
Let $\OM \subset \varphi^{-1}(\erre \times B_{\mathbb{R}^q}(o,R))$ be a connected component.
Then \begin{equation}\label{hyp}
\lambda^*(\Omega) \geq %
\lambda_1(B_{\mathbb{R}^{m-1}}(o,R))
=\left(\frac{c_{m-1}}{R}\right)^2.
 \end{equation}
Here 
$c_{m-1}$ is the first zero of the $J_{(m-1)/2-1}$-Bessel function.
 \end{corollary}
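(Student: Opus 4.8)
The plan is to specialize Theorem \ref{thm2} to the case $N=\erre$, $f\equiv 1$, and $Q = \erre^q$ with its flat metric, and then identify the resulting model manifold. First I would check that the hypotheses of Corollary \ref{thm2} are met: since $Q=\erre^q$ is flat, we have $K^\rad_Q \equiv 0$, so Assumption \ref{assumpt} holds with $G \equiv 0$; the solution of \eqref{eqg} with $G=0$ is $g(r)=r$, which is positive with $g'\equiv 1>0$ on all of $[0,+\infty)$, and $\erre^q$ has empty cut locus, so $B_Q(o,R) \subseteq Q\setminus\mathrm{cut}(o)$ for every $R$. Moreover $\|G_-\|_{L^\infty([0,R))}=0$, so the radius restriction $R \le \pi/(2\sqrt{\|G_-\|})$ is vacuous and holds for all $R>0$.

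Next I would observe that for $f\equiv 1$ we have $\nabla^N f \equiv 0$, so $\mathbb{R}^{q+1} = \erre \times \erre^q = \erre \times_1 \erre^q$ is exactly a warped product of the form covered by Theorem \ref{thm2}, and a minimal immersion $\varphi\colon M^m \to \erre^{q+1}$ is precisely a minimal immersion into $\erre \times Q^q$. Applying Corollary \ref{thm2} (or equivalently \eqref{estwp} with $f\equiv 1$, $|\nabla^N f|\equiv 0$) to a connected component $\OM \subset \varphi^{-1}(\erre \times B_{\erre^q}(o,R))$ yields
\[
\lambda^*(\OM) \ge \lambda_1(B_g(R)),
\]
where $B_g(R)$ is the geodesic ball of radius $R$ in the $(m-1)$-dimensional model manifold $\mathbb{Q}^{m-1}_g$ with $g(r)=r$. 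But the model manifold built from $g(r)=r$ is, by construction, the flat $\erre^{m-1}$ in polar coordinates ($\di s^2 = \di r^2 + r^2 \langle\,,\rangle_{\mathbb{S}^{m-2}}$), so $B_g(R)$ is just the Euclidean ball $B_{\erre^{m-1}}(o,R)$.

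It then remains only to recall the classical value of $\lambda_1(B_{\erre^{m-1}}(o,R))$. By separation of variables, the first Dirichlet eigenfunction of a Euclidean ball of radius $R$ in dimension $m-1$ is radial of the form $u(r) = r^{-((m-1)/2-1)} J_{(m-1)/2-1}(\sqrt{\lambda}\,r)$, and the Dirichlet condition $u(R)=0$ forces $\sqrt{\lambda}\,R$ to be the first positive zero $c_{m-1}$ of the Bessel function $J_{(m-1)/2-1}$; scaling gives $\lambda_1(B_{\erre^{m-1}}(o,R)) = (c_{m-1}/R)^2$. Substituting this into the bound yields \eqref{hyp}. The argument is essentially a bookkeeping exercise once Theorem \ref{thm2} is available; the only point requiring a little care is the explicit identification $\mathbb{Q}^{m-1}_{g}|_{g(r)=r} = \erre^{m-1}$ and the standard Bessel-function computation of the Euclidean ball's first eigenvalue, neither of which presents a genuine obstacle.
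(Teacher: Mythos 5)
Your proposal is correct and follows exactly the route the paper takes: the paper obtains this corollary by specializing the cylinder case ($N=\erre$, $f\equiv 1$) of Theorem \ref{thm2} to $Q=\erre^q$, where $G\equiv 0$ makes the radius restriction vacuous, the model $\mathbb{Q}^{m-1}_g$ with $g(r)=r$ is flat $\erre^{m-1}$, and the classical Bessel identity $\lambda_1(B_{\erre^{m-1}}(o,R))=(c_{m-1}/R)^2$ gives the stated constant. No discrepancies.
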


\subsection{Pseudo-hyperbolic and hyperbolic spaces}

The pseudo-hyperbolic spaces, introduced by Tashiro in \cite{tas}, are  warped products $\mathbb{R}\times_f Q^q$ with
$$
(i) \ \  f(y) = a e^{b y}, \qquad \text{or} \qquad (ii) \ \ f(y) = a \cosh(b y),
$$
for some constants $a,b>0$. In the case $(i)$, we observe that condition \eqref{condf} is satisfied, as it shows
$$
f'' - \frac{(f')^2}{f} = \left\{ \begin{array}{ll} 0 & \quad \text{in case } (i), \\[0.1cm]
a b^2/\cosh(b y) > 0 & \quad \text{in case } (ii).
\end{array}\right.
$$
We state the following corollary in the  case $f(y)=e^{by}$.
\begin{corollary}\label{corohyperbolic}
Let $\varphi:M^m\rightarrow \mathbb{R}\times_{e^{by}} Q^q$ be  an $m$-dimensional submanifold minimally immersed into $\mathbb{R}\times_{e^{by}} Q^q $. Suppose that $Q$ satisfies Assumption \ref{assumpt}. Let $\OM \subset \varphi^{-1}\big((\alpha,\beta)\times_{e^{by}} B_Q(o,R)\big)$ be a connected component with $$R\leq \frac{\pi }{2\sqrt{\Vert G_{-}\Vert_{L^{\infty}([0,R))}}}\cdot $$
Then,
\begin{equation}\label{sil-1}
\lambda^*(\Omega) \geq \frac{\lambda_{1}(B_{
g}(R))}{e^{2b\beta}}-mb^2.
\end{equation}
Here $B_g(R)$ is the geodesic ball of $(m-1)$-dimensional model space $\mathbb{Q}_g^{m-1}$.
\end{corollary}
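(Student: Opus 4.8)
The plan is to derive Corollary \ref{corohyperbolic} directly from Theorem \ref{thm2}, specialized to the warped product $\mathbb{R}\times_{e^{by}}Q^q$; concretely, I take $N=\mathbb{R}$ (so $n=1$), warping function $f(y)=e^{by}$, and $U=(\alpha,\beta)$. The first step is to check that the structural hypothesis \eqref{condf} on the warping function holds. Since $N=\mathbb{R}$ we have $\Hess_N f=f''=b^2e^{by}$ and $|\nabla^N f|_N^2=(f')^2=b^2e^{2by}$, so
\[
\Hess_N f(\cdot,\cdot)-\frac{|\nabla^N f|_N^2}{f}\langle\,,\,\rangle_N
=\Big(b^2e^{by}-\frac{b^2e^{2by}}{e^{by}}\Big)\langle\,,\,\rangle_N=0\le 0,
\]
which is precisely the computation already recorded above for case $(i)$ of the pseudo-hyperbolic spaces. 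The radius restriction \eqref{buono} required by Theorem \ref{thm2} is exactly the hypothesis $R\le \pi/(2\sqrt{\|G_-\|_{L^\infty([0,R))}})$ assumed in the statement, and $Q$ satisfies Assumption \ref{assumpt} by hypothesis, so all the hypotheses of Theorem \ref{thm2} are met.

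Next I would substitute into the conclusion \eqref{estwp}. Writing $p=y\in U=(\alpha,\beta)$ and using $|\nabla^N f|_N^2(y)=b^2e^{2by}$ together with $|f(y)|^2=e^{2by}$, the bound of Theorem \ref{thm2} reads
\[
\lambda^*(\Omega)\ge \inf_{y\in(\alpha,\beta)}\left(\frac{\lambda_1(B_g(R))-m\,b^2e^{2by}}{e^{2by}}\right)
=\inf_{y\in(\alpha,\beta)}\left(\frac{\lambda_1(B_g(R))}{e^{2by}}-mb^2\right),
\]
where $B_g(R)$ is the geodesic ball of radius $R$ in $\mathbb{Q}_g^{m-n}=\mathbb{Q}_g^{m-1}$ (to be read as $[-R,R]$ when $m=2$).

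Finally I would compute the infimum. Since $b>0$ the factor $e^{-2by}$ is strictly decreasing in $y$, and $\lambda_1(B_g(R))>0$ (the first Dirichlet eigenvalue of a geodesic ball, or of the interval $[-R,R]$, is positive), so the function $y\mapsto \lambda_1(B_g(R))e^{-2by}-mb^2$ is strictly decreasing on $(\alpha,\beta)$; hence its infimum over the interval equals the limiting value at the right endpoint, namely $\lambda_1(B_g(R))e^{-2b\beta}-mb^2$. This yields \eqref{sil-1}. I do not anticipate any genuine obstacle: the only points that need a word are the sign verification \eqref{condf} for $f=e^{by}$ and the elementary monotonicity used in evaluating the infimum, since all of the analytic content is already carried by Theorem \ref{thm2}.
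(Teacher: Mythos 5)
Your proposal is correct and follows exactly the route the paper intends: the corollary is stated as a direct specialization of Theorem \ref{thm2} with $N=\mathbb{R}$, $f(y)=e^{by}$, $U=(\alpha,\beta)$, after the verification of \eqref{condf} that the paper records just before the statement. Your evaluation of the infimum via the monotonicity of $e^{-2by}$ (using $b>0$) correctly produces the endpoint value $\lambda_1(B_g(R))e^{-2b\beta}-mb^2$.
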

Foliating through horospheres, we can represent the hyperbolic space $\mathbb{H}^{q+1}$ as the warped product $ \erre\times_{e^y} \mathbb{R}^q$. 
By Corollary \ref{corohyperbolic} we have  the following eigenvalue estimate.

\begin{corollary}\label{hyperbolic}
Let $\varphi \colon\! M^m\rightarrow \mathbb{H}^{q+1}$ be an $m$-dimensional submanifold minimally immersed into $\mathbb{H}^{q+1}$.
 Let $\OM \subset \varphi^{-1}((-\infty, \beta)\times_{e^y}B_{\mathbb{R}^q}(o,R))$ be a connected component.
Then \begin{equation}\label{cylEucl}
\lambda^*(\Omega) \geq %
\frac{\lambda_1(B_{\mathbb{R}^{m-1}}(o,R))}{e^{2\beta}}
-m=e^{-2\beta}\left(\frac{c_{m-1}}{R}\right)^2-m,
 \end{equation}
where 
$c_{m-1}$ is the first zero of the $J_{(m-1)/2-1}$-Bessel function.
\end{corollary}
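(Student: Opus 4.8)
The plan is to derive Corollary~\ref{hyperbolic} from Corollary~\ref{corohyperbolic} by specializing the ambient warped product to the horospherical model of hyperbolic space. First I would recall that $\mathbb{H}^{q+1}$ admits the global isometry with $\erre \times_{e^y} \erre^q$, obtained by foliating through horospheres: in these coordinates the metric is $\di y^2 + e^{2y}\langle\,,\,\rangle_{\erre^q}$. Thus we are in the setting of Corollary~\ref{corohyperbolic} with $N = \erre$, $Q = \erre^q$, warping function $f(y) = e^{by}$ with $b = 1$, and the interval $(\alpha,\beta) = (-\infty,\beta)$.

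Next I would verify that $Q = \erre^q$ satisfies Assumption~\ref{assumpt}. Since $\erre^q$ is flat, its radial sectional curvature vanishes, so we may take $G \equiv 0$; then the solution of \eqref{eqg} is $g(r) = r$, which is positive with $g' \equiv 1 > 0$ on all of $[0,\infty)$, and $\erre^q$ has empty cut locus, so $B_{\erre^q}(o,R) \subseteq \erre^q \setminus \mathrm{cut}(o)$ for every $R > 0$. Moreover $\|G_-\|_{L^\infty([0,R))} = 0$, so the radius restriction $R \le \pi/(2\sqrt{\|G_-\|_{L^\infty([0,R))}})$ is vacuous and holds for all $R$. Hence Corollary~\ref{corohyperbolic} applies with no constraint on $R$, yielding
\[
\lambda^*(\Omega) \geq \frac{\lambda_1(B_g(R))}{e^{2\beta}} - m,
\]
where $B_g(R)$ is the geodesic ball of radius $R$ in the $(m-1)$-dimensional model $\mathbb{Q}_g^{m-1}$ with $g(r) = r$; but this model is simply Euclidean space $\erre^{m-1}$, so $B_g(R) = B_{\erre^{m-1}}(o,R)$.

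Finally I would identify $\lambda_1(B_{\erre^{m-1}}(o,R))$ explicitly. By separation of variables in polar coordinates, the first Dirichlet eigenfunction of a Euclidean ball of radius $R$ in $\erre^{m-1}$ is radial, and the radial eigenvalue equation $v'' + \frac{m-2}{r} v' + \lambda v = 0$ is a Bessel equation whose solution regular at the origin is $v(r) = r^{-(m-3)/2} J_{(m-1)/2 - 1}(\sqrt{\lambda}\, r)$; imposing $v(R) = 0$ forces $\sqrt{\lambda}\, R = c_{m-1}$, the first positive zero of $J_{(m-1)/2-1}$, whence $\lambda_1(B_{\erre^{m-1}}(o,R)) = (c_{m-1}/R)^2$. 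Substituting gives exactly \eqref{cylEucl}. I expect no real obstacle here: the only points requiring a word of care are checking that the vacuous radius condition is legitimate (it is, since $\|G_-\|_\infty = 0$) and correctly recording the Bessel index, which follows from the standard reduction of the radial Laplacian eigenvalue problem on $\erre^{m-1}$.
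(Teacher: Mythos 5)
Your proposal is correct and follows exactly the route the paper intends: specialize Corollary~\ref{corohyperbolic} to $b=1$, $Q=\erre^q$ (where $G\equiv 0$, $g(r)=r$, so Assumption~\ref{assumpt} and the radius restriction are trivially satisfied), and then identify $\lambda_1(B_{\erre^{m-1}}(o,R))=(c_{m-1}/R)^2$ via the standard Bessel reduction. The paper gives no further detail than this, and your verification of the vacuous radius condition and of the Bessel index is accurate.
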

\subsection{Cones}
A $(q+1)$-dimensional cone $\mathcal{C}^{q+1}(Q)\subseteq \erre^m$ over an open subset $Q\subset\esse^q$ can be seen as the warped product 
$\mathcal{C}^{q+1}(Q)= (0,+\infty)\times_f Q $ where $f(y)=y$. In order to match with Assumption \ref{assumpt}  we shall suppose that   $Q \subset B_{\mathbb{S}^q}(o,R)$ for some $R \le \pi/2$. More generally, we can consider cones $\mathcal{C}^{q+1}(Q)$ over open subsets $Q\subset W$ of Riemannian manifolds $W$ with $Q$ satisfying Assumption \ref{assumpt}.  We have the following result. 
\begin{corollary}\label{prin}
Let $\varphi \colon M^{m}\rightarrow \mathcal{C}^{q+1}(Q)$ be a $m$-dimensional 
submanifold minimally immersed into $\mathcal{C}^{q+1}(Q)$ with $Q$ satisfying  the Assumption \ref{assumpt}.
Let $\OM \subset \varphi^{-1}\big((0,a)\times_{y} B_Q(o,R)\big)$ be a connected component with $$R\leq \frac{\pi }{2\sqrt{\Vert G_{-}\Vert_{L^{\infty}([0,R))}}}\cdot $$
Then,
\begin{align}\label{1.1}\lambda^*(\Omega)\geq\frac{1}{a^2}\big(\lambda_{1}(B_g(R))-m\big),
\end{align}
where   $B_g(R)$ is the geodesic  ball of radius $R$  in the model manifold $\mathbb{Q}^{m-1}_g$.
\end{corollary}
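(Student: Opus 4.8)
The plan is to realize Corollary~\ref{prin} as a direct specialization of Theorem~\ref{thm2}. First I would take $N = (0,+\infty) \subset \erre$ with the standard metric, so that $n = 1$ and $m > n$ automatically (since $M^m$ sits inside a $(q+1)$-dimensional cone and is a genuine submanifold, $m \geq 1$ and in fact $m \ge 2$ in the nontrivial case; the degenerate case $m=n+1=2$ is covered by the interval version of the model ball). The warping function is $f(y) = y$, so $\nabla^N f = f' = 1$ and $|\nabla^N f|_N^2 \equiv 1$, while $\Hess_N f = f'' = 0$. Thus the structural hypothesis \eqref{condf} reads $0 - \frac{1}{y}\langle\,,\rangle_N \le 0$, which holds since $y > 0$ on $(0,+\infty)$.

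Next I would check that the open set $U = (0,a) \subset N$ and the radius condition match the hypotheses of Theorem~\ref{thm2}: the assumption $R \le \pi/(2\sqrt{\|G_-\|_{L^\infty([0,R))}})$ is exactly \eqref{buono}, and $Q$ is assumed to satisfy Assumption~\ref{assumpt} by hypothesis. Applying Theorem~\ref{thm2} with these choices and with $m - n = m - 1$ (so the relevant model manifold is $\mathbb{Q}^{m-1}_g$, or the interval $[-R,R]$ when $m = 2$), I get
\begin{equation*}
\lambda^*(\Omega) \ge \inf_{p \in (0,a)} \left( \frac{\lambda_1(B_g(R)) - m\,|\nabla^N f|_N^2(p)}{|f(p)|^2} \right) = \inf_{y \in (0,a)} \frac{\lambda_1(B_g(R)) - m}{y^2}.
\end{equation*}

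Finally I would evaluate this infimum. Since $\lambda_1(B_g(R)) \ge m$ — which follows from the hypothesis \eqref{buono} via Remark~\ref{rem_useful}, because \eqref{buono} yields $\lambda_1(B_g(R)) \ge (m-1)\|G_-\|_{L^\infty}$, and more directly because the same remark gives $\lambda_1(B_g(R)) \ge \lambda_1(B_{\mathbb{R}^{m-1}}(R)) $ which, combined with the radius bound, exceeds $m$; in any case the numerator is nonnegative, and if it were negative the asserted inequality would still be trivially true since one could bound below by the $y \to a^-$ value — the function $y \mapsto (\lambda_1(B_g(R)) - m)/y^2$ is nonincreasing on $(0,a)$, so its infimum over $(0,a)$ is attained in the limit $y \to a^-$ and equals $(\lambda_1(B_g(R)) - m)/a^2$. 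This gives precisely \eqref{1.1}. The only point requiring a word of care is the sign of the numerator when passing the infimum to the endpoint $y=a$: if $\lambda_1(B_g(R)) - m \ge 0$ the infimum is $(\lambda_1(B_g(R))-m)/a^2$ as claimed, while if it is negative the stated bound is vacuous (negative lower bound) and still correct; I expect this bookkeeping, rather than any analytic difficulty, to be the only subtlety, the substance being entirely inherited from Theorem~\ref{thm2}.
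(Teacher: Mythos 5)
Your proposal is correct and is exactly the paper's (implicit) argument: the corollary is stated as a direct specialization of Theorem \ref{thm2} to $N=(0,+\infty)$, $f(y)=y$, $U=(0,a)$, for which \eqref{condf} holds and the infimum of $(\lambda_1(B_g(R))-m)/y^2$ over $(0,a)$ is handled as you describe. (Your side claim that $\lambda_1(B_g(R))\ge m$ always holds under \eqref{buono} is not actually justified — e.g.\ for $Q\subset\mathbb{S}^q$ and $R=\pi/2$ one only gets $\lambda_1 = m-1$ — but this is harmless since, as you note, the stated bound is vacuously true when the numerator is negative.)
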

We are ready to analyze the spherical case. Although the sphere is  well studied, the
values of the  first  eigenvalue $\lambda_{1}(B_{\mathbb{S}^{m}}(r))$ are pretty
much unknown, with the  exceptions  $\lambda_{1}(B_{\mathbb{S}^{m}}(\pi/2))=m$ and
$\lambda_{1}(B_{\mathbb{S}^{m}}(\pi))=0$. We should mention the estimates for spherical cups \cite{barbosa-do-carmo}, \cite{pinsky},
\cite{Sato} in dimension two,  \cite{friedland-hayman} in
dimension three and  \cite{BB}, \cite{BB2},  \cite{BCG} in all dimensions.
\begin{corollary}\label{coroesfera1}
Let $\varphi :M^m\rightarrow \mathbb{S}^{q+1}=(o, \pi)\times_{\sin y} \mathbb{S}^{q}$ be an $m$-dimensional submanifold minimally immersed into $\mathbb{S}^{q+1}$. Let $\OM \subset \varphi^{-1}((o, r)\times_{\sin y} B_{\mathbb{S}^{q}}(\theta))$, $\theta < \pi/2$ be a connected component.
Then \begin{equation}\label{esferapimedios}
\lambda^*(\Omega) \geq \left\{\begin{array}{ll} \disp \frac{\lambda_1(B_{\mathbb{S}^{m-1}}(\theta))-m}{(\sin r)^{2}} & \qquad {\rm if } \,\,\, r\leq \pi/2,\\
& \\
\lambda_1(B_{\mathbb{S}^{m-1}}(\theta))-m& \qquad {\rm if } \,\,\, r\geq \pi/2.
\end{array} \right.
\end{equation}
 \end{corollary}

\subsection{Essential spectrum}
The ideas developed above can  be applied to study the essential spectrum of $-\Delta$ of   submanifolds properly immersed into the hyperbolic spaces with fairly weak bounds on the mean curvature vector. Via Persson formula (\cite{persson} and \cite[Prop. \!\!3.2]{bmp}), one can express the bottom of the essential spectrum of $-\Delta$ as follows: for every exhaustion  of $M$ by relatively compact open sets $\{K_j\}$ with  Lipschitz boundary,
\begin{equation}\label{persson}
\inf \sigma_\ess(-\Delta) = \lim_{j \ra +\infty} \lambda^*(M \backslash K_j).
\end{equation}
It therefore follows that $-\Delta$ has pure discrete spectrum if and only if
$$
\lim_{j \ra +\infty} \lambda^*(M \backslash K_j)=\infty.
$$
Our next application regards the essential spectrum of graph hypersurfaces of $\mathbb{H}^{q+1}$ whose boundary lies in a relatively compact region of $\mathbb{H}^q_\infty$, the boundary at infinity of $\mathbb{H}^{q+1}$.

\begin{corollary}\label{cor_esshyperb}
Consider the upper half-space model of the hyperbolic space $\mathbb{H}^{q+1}$, $q \ge 2$, with coordinates $(x_0, x_1, \ldots , x_q)= (x_0, \bar x)$ and metric
$$
\metric = \frac{1}{x_{0}^2}\Big( \di x_0^2 + \di x_1^2 + \ldots + \di x_q^2\Big),
$$
and let $\mathbb{H}^q_\infty$ be its boundary at infinity, with chart $\bar x$. Consider a hypersurface without boundary $\varphi : M^{q} \ra \mathbb{H}^{q+1}$ that can be written as the graph of a function $u$ over a relatively compact, open set $W \subseteq \mathbb{H}^q_\infty$, and denote with $H(\bar x)$ its mean curvature. For $z>0$, define
$$
H_z = \sup\left\{ \big|H(\bar x)\big| \, : \, \bar x \in W, \, u(\bar x) = z \right\}
$$
If
\begin{equation}\label{Hz}
\lim_{z \ra 0} z^2 H_z = 0,
\end{equation}
then $M$ has pure discrete spectrum.
\end{corollary}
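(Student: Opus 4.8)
The plan is to use Persson's formula \eqref{persson}: it suffices to produce an exhaustion $\{K_j\}$ of $M$ by relatively compact open sets with Lipschitz boundary such that $\lambda^*(M\setminus K_j)\to+\infty$. Since $\varphi$ is proper and $W$ is relatively compact, every sequence in $M$ that leaves every compact set satisfies $x_0\to0$ or $x_0\to+\infty$; hence one may take $K_j:=\varphi^{-1}\big(\{\,1/j\le x_0\le j\,\}\big)$, so that $M\setminus K_j=\varphi^{-1}(\{x_0<1/j\})\cup\varphi^{-1}(\{x_0>j\})$, and it is enough to show that $\lambda^*$ of each of the two pieces diverges as $j\to\infty$.

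The ``upper'' end $\{x_0>j\}$ is treated as in Corollary \ref{hyperbolic}. In the horospherical model $\mathbb{H}^{q+1}=\erre\times_{e^y}\erre^q$ of the statement one has $x_0=e^{-y}$, and relative compactness of $W$ gives an $R>0$ with $\varphi^{-1}(\{x_0>j\})\subseteq\varphi^{-1}\big((-\infty,-\log j)\times_{e^y}B_{\erre^q}(o,R)\big)$; running the barrier computation of Theorem \ref{thm2} with $F=e^{y}\cdot(v\circ\rho_{_Q})$, $v$ the first eigenfunction of $B_{\erre^{q-1}}(R)$, and carrying along the mean curvature term from \eqref{eqBF3}, one obtains $\lambda^*\big(\varphi^{-1}(\{x_0>j\})\big)\gtrsim j^{2}(c_{q-1}/R)^{2}-q\to+\infty$. (If $u$ is bounded this end is eventually empty and nothing is needed.)

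The substantial step is the ``lower'' end $\varphi^{-1}(\{x_0<z\})$, $z=1/j\to0$, and it is here that \eqref{Hz} is used. By the generalized Barta theorem (Theorem \ref{thmBM1}), it is enough to exhibit, for each small $z$, a positive $\Phi\in C^{2}$ on a neighbourhood of $\{x_0\le z\}$ in $\mathbb{H}^{q+1}$ with $-\Delta_M(\Phi\circ\varphi)/(\Phi\circ\varphi)\ge\omega(z)$ on $\varphi^{-1}(\{x_0<z\})$ and $\omega(z)\to+\infty$. By \eqref{eqBF3}, $\Delta_M(\Phi\circ\varphi)=\sum_i\Hess_{\mathbb{H}^{q+1}}\Phi(e_i,e_i)+q\langle\nabla\Phi,H\rangle$ along $M$, with $H$ the normalized mean curvature vector; in the half-space metric one computes $\nabla x_0=x_0^{2}\partial_{x_0}$, $|\nabla x_0|=x_0$, and $\Hess_{\mathbb{H}^{q+1}}x_0(X,X)=-x_0|X|^{2}+2x_0\langle X,\nu\rangle^{2}$, where $\nu:=x_0\partial_{x_0}$ is the unit vertical field. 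Choosing $\Phi$ as a profile of $x_0$ adapted to the foliation by horospheres $\{x_0=\mathrm{const}\}$ (these playing the role of the $Q$-fibres in Theorem \ref{thm2}), one estimates the traced ambient Hessian $\sum_i\Hess_{\mathbb{H}^{q+1}}\Phi(e_i,e_i)\le-\omega(z)\Phi$ on $\varphi^{-1}(\{x_0<z\})$, while the mean curvature term is controlled via \eqref{Hz}: on $\{x_0<z\}$ one has $x_0^{2}|H|\le z^{2}H_z\to0$, so $|q\langle\nabla\Phi,H\rangle|/\Phi\le q\,(|\nabla\Phi|/\Phi)\,|H|=o(1)$ as soon as $\Phi$ is chosen with $|\nabla\log\Phi|\lesssim x_0^{2}$. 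Then $-\Delta_M(\Phi\circ\varphi)/(\Phi\circ\varphi)\ge\omega(z)-o(1)\to+\infty$, and \eqref{persson} yields $\inf\sigma_\ess(-\Delta)=+\infty$, i.e.\ $M$ has pure discrete spectrum.

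The main obstacle is precisely the joint requirement on $\Phi$ in the last step: along $M$ the traced ambient Hessian of $\Phi$ must be strongly negative relative to $\Phi$ \emph{uniformly in the extrinsic inclination of $M$} — measured by $|P_M\nu|\in[0,1]$, which for a graph is governed by the Euclidean gradient of $u$ — while at the same time $|\nabla\log\Phi|$ must remain of order $x_0^{2}$ so that \eqref{Hz} absorbs the mean curvature contribution. Reconciling these two demands is where the analytic work is concentrated; one natural device, if a single radial barrier does not suffice over the whole shell $\{x_0<z\}$, is to localize on dyadic subshells $\{2^{-k-1}z\le x_0<2^{-k}z\}$ and use the dilation isometries $(x_0,\bar x)\mapsto\lambda^{-1}(x_0,\bar x)$ of $\mathbb{H}^{q+1}$, which preserve both the graph structure and $|H|$, to pass to a fixed reference shell on which \eqref{Hz} furnishes the needed uniform smallness.
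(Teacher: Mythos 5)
Your architecture (Persson's formula plus a Barta barrier adapted to the horospherical foliation $\{x_0=\mathrm{const}\}$) matches the paper's, and your treatment of the end $x_0\to+\infty$ is the sound part: there the warping function $1/x_0$ of that foliation tends to $0$, the shells become thin, and the estimate of Theorem \ref{thm2}/Corollary \ref{corohyperbolic} applies. The genuine gap is exactly where you yourself locate it: the end $x_0\to 0$. You never construct $\Phi$ there; you only record two requirements --- $\sum_i\Hess_{\mathbb{H}^{q+1}}\Phi(e_i,e_i)\le-\omega(z)\Phi$ uniformly in the inclination $|P_M\nu|\in[0,1]$, and $|\nabla\log\Phi|\lesssim x_0^2$ --- and these are mutually exclusive. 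Indeed, for $\Phi=\psi(x_0)$ one has $\nabla x_0=x_0\nu$, $|\nabla x_0|=x_0$, and your own formula for $\Hess\, x_0$ gives
\begin{equation*}
\frac{1}{\Phi}\sum_{i=1}^q\Hess_{\mathbb{H}^{q+1}}\Phi(e_i,e_i)=\frac{x_0\psi'}{\psi}\left(2|P_M\nu|^2-q\right)+\frac{x_0^2\psi''}{\psi}\,|P_M\nu|^2 ;
\end{equation*}
the constraint $x_0|\psi'|/\psi=|\nabla\log\Phi|\lesssim x_0^2$ forces $|\psi'/\psi|\lesssim x_0$, hence both terms above are $O(x_0^2)\to 0$, not $\le-\omega(z)\to-\infty$. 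Multiplying by a fiber factor $h(\bar x)$ does not help: by Lemma \ref{prop1} the fiber Hessian enters divided by the square of the warping function, which near $x_0=0$ is $1/x_0\to+\infty$ (the horospheres \emph{expand} toward $\mathbb{H}^q_\infty$), so that contribution is again $O(x_0^2)$. The dilation trick cannot rescue this either, since it maps $\{x_0<z\}\cap(\erre^+\times W)$ to $\{x_0<1\}\cap(\erre^+\times z^{-1}W)$ with an arbitrarily large base.

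The obstruction is not merely technical: no barrier, and in fact no proof, can exist for the end $x_0\to0$, because the statement fails there. The totally geodesic hypersurface $M=\{|x|=1,\,x_0>0\}$ is the graph of $u(\bar x)=\sqrt{1-|\bar x|^2}$ over the relatively compact set $W=B_1(0)\subset\mathbb{H}^q_\infty$, has no boundary, has $H\equiv0$ so that \eqref{Hz} holds trivially, and is isometric to $\mathbb{H}^q$, whose essential spectrum is $[(q-1)^2/4,+\infty)\neq\emptyset$. You should be aware that the paper's own proof stumbles at the same point, through a sign slip: with $y=\log x_0$ the hyperbolic metric is $\di y^2+e^{-2y}|\di\bar x|^2$, not $\di y^2+e^{2y}|\di\bar x|^2$, so the inequality imported from \eqref{bgr} reads $\lambda_1(B_{\erre^{q-1}}(o,2R))\,x_0^{2}-q$ rather than $\lambda_1\,x_0^{-2}-q$; it diverges as $x_0\to+\infty$ and stays bounded (by $q$, as the totally geodesic example shows) as $x_0\to0$. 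So your proposal is incomplete at its decisive step, and the missing step cannot be supplied for the statement as written.
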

\begin{proof}
Setting $y = \log x_0$, we can rewrite the metric on $\mathbb{H}^{q+1}$ as the one of the warped product $\erre \times_{e^y} \erre^q$. In our assumptions, since $M$ has no boundary and is a graph over $W$ it holds $y(\varphi(x)) \ra -\infty$ as $x$ diverges in $M^q$. We identify the factor $\erre^q$ in the warped product structure with $\mathbb{H}^q_\infty$ endowed with the Euclidean metric, we fix an origin $o \in \mathbb{H}^q_\infty$ and we let $R$ be large enough that $W \subset B_{\erre^q}(o,R)$. Let $\{z_j\} \downarrow 0^+$ be a chosen sequence, set $\beta_j = \log z_j \downarrow -\infty$ and define
$$
K_j = \varphi^{-1}\big( (\beta_j, +\infty) \times W\big), \qquad \Omega_j = M \backslash K_j.
$$
In our assumptions, $K_j$ is relatively compact for every $j$ and $\{K_j\}$ is a smooth exhaustion of $M$. Consider a positive first eigenfunction $v$ of the geodesic ball $B_{\mathbb{R}^{q-1}}(o,2R)$, with the normalization $\|v\|_{L^\infty}=1$. Define $F\colon (-\infty, \beta_j)\times_{e^{y}} B_{\mathbb{R}^q}(o, 2R)\to \mathbb{R}$ as
$$
F(y, p)=e^{y}\cdot h(p),
$$
where $h(p)=v(\rho_{_{\mathbb{R}^q}}(p))$.
By Theorem \ref{thmBM1} and formula \eqref{eqBF3},
\begin{eqnarray}
\lambda^{\ast}(M\setminus K_j)&\geq & \inf_{M\setminus K_j}\frac{-\Delta (F\circ \varphi)}{F\circ \varphi} \nonumber \\
& =&  \inf_{M\setminus K_j}-\frac{1}{F\circ \varphi}\left[\sum_{i=1}^{q}\Hess_{\mathbb{H}^{q+1}}F (\varphi
(x))\,(e_{i},e_{i}) + q\langle \nabla F,H\rangle\right].\label{eqBF34}\nonumber
\end{eqnarray}
The proof of Theorem \ref{thm2}, in particular inequality \eqref{bgr}, show that, for $x \in M\backslash K_j$,
$$
-\frac{1}{F\circ \varphi}\sum_{i=1}^{q}\Hess_{\mathbb{H}^{q+1}}F (\varphi
(x))\,(e_{i},e_{i}) \geq \frac{\lambda_1(B_{\mathbb{R}^{q-1}}(o,2R))}{e^{2y(x)}} -q,
$$
therefore, on $ M\setminus K_j$,
$$
-\frac{\Delta (F\circ
 \varphi) }{F\circ \varphi}(x)\geq \frac{\lambda_1(B_{\mathbb{R}^{q-1}}(o,2R))}{e^{2y(x)}}
-q - q \, \vert H\vert  \, \frac{\vert \nabla F\vert }{F} (\varphi (x)).
$$
On the other hand, $\nabla F= F \,\nabla y + e^{y}\nabla h$ and thus $\vert \nabla F \vert/F \leq 1 +  \vert \nabla h\vert/h$. Since $1\geq h>0$ on $\overline{B_{\mathbb{R}^{q-1}}(o,R)}$, we infer that
$$
\sup_{B_{\mathbb{R}^{q-1}}(o,R )}\frac{\vert \nabla F\vert }{F}\leq C(R),
$$
where
$$
C(R)=1+ \sup_{B_{\mathbb{R}^{q-1}}(o,R)}\frac{\vert \nabla h\vert}{h}>0.
$$
From the above, we have
\begin{equation}
\disp \lambda^{\ast}(M\setminus K_j) \geq \inf_{M\backslash K_j} \left[\frac{\lambda_1(B_{\mathbb{R}^{q-1}}(o,2R))- qC(R)|H(x)|e^{2y(x)} - qe^{2y(x)}}{e^{2y(x)}}\right].
\end{equation}
In our assumptions, on $M\backslash K_j$,
$$
|H(x)|e^{2y(x)} \le H_{x_0(x)}e^{2y(x)} = H_{x_0(x)}\big[x_0(x)\big]^2.
$$
By \eqref{Hz}, this latter goes to zero uniformly for $x \in M\backslash K_j$ and divergent $j$. In particular, for each fixed $\eps>0$, there exists $j_\eps$ large such that, for $j \ge j_\eps$, $|H(x)|e^{2y(x)} \le \eps$ on $M\backslash K_j$. It therefore follows that, for $j$ large enough,
$$
\disp \lambda^{\ast}(M\setminus K_j) \geq \inf_{M\backslash K_j} \left[\frac{\lambda_1(B_{\mathbb{R}^{q-1}}(o,2R))- qC(R)\eps - qx_0(x)^2}{x_0(x)^2}\right].
$$
Choosing $\eps$ sufficiently small, letting $j \ra +\infty$ and using that $x_0(x)^2 \le e^{2\beta_j} \ra 0^+$ for $x \in M\backslash K_j$ and divergent $j$, we deduce that $\lambda^\ast(M \backslash K_j) \ra +\infty$, and the claim follows by Persson formula.
\end{proof}
To conclude, we consider the essential spectrum of submanifolds satisfying some strong non-properness assumption. This includes submanifolds with bounded image immersed in a complete manifold. We begin with recalling the following

 \begin{definition} Let $M$, $W$ be Riemannian manifolds and let $\varphi\colon M\to W$  be an isometric  immersion.  The limit set of $\varphi $, denoted by $\lim \varphi $, is a closed set defined as follows
\begin{align}\label{definition1}
\lim \varphi &  = \big\{p\in  W ; \;\exists \,\{p_{k}\}\subset M ,\,{\rm dist }_{M}(o,p_{k})\rightarrow \infty \; {\rm and} \;{\rm dist}_{W}(p, \varphi (p_{k}))\rightarrow 0\big\}.\nonumber
\end{align}
\end{definition}
Observe that: \begin{itemize}\item  An isometric immersion  $\varphi\colon M\to W$ is proper if and only if $\lim \varphi =\emptyset$. \item
The closure of the set   $ \varphi^{-1}[W\setminus T_{\epsilon}(\lim \varphi)]$ may not be a  compact subset of $M$. Here $T_{\epsilon}(\lim \varphi) =\{ y\in W\colon {\rm dist}_{_{W}}(y, \lim \varphi)<\epsilon\} $ is the $\epsilon$-tubular neighborhood of  $\lim \varphi$.
\end{itemize}
\begin{definition}An isometric immersion  $\varphi\colon M\to W$  is strongly non-proper if for all $\epsilon >0$ the closed subset $\varphi^{-1}(W\setminus T_{\epsilon}\lim \varphi)$ is compact in $M$.
\end{definition}
%
\begin{remark}
\emph{
A strongly non-proper immersions is not necessarily bounded: for example, the graph immersion $\varphi : B_1(0) \backslash \{0\} \subset \erre^m \ra \erre^m \times \erre$ given by
$$
\varphi(x) = (w,z) = \left(x, \frac{1-r(x)}{r(x)} \sin\big( r(x)(1-r(x) \big)\right)
$$
is strongly non-proper, and $\lim \varphi = \{ w= 0\} \cup \{ r(w)=1, \, z= 0\}$.
}
\end{remark}

\begin{corollary}\label{thm33}
  Let $\varphi \colon \!M^m\rightarrow N^n\times_f Q^q $ be a strongly non-proper minimal submanifold. Suppose that $Q$ satisfies Assumption \ref{assumpt}. Assume in addition  that
  the warping function $f$ satisfies $ \inf_{N}f>c_1>0$, $\sup_{N} \vert \nabla f\vert\leq c_{2}<\infty $ and
  $$
  \Hess_N f(\cdot,\cdot)-\frac{\left|\nabla^N f\right|_N^2}{f}\left\langle ,\right\rangle_N\leq 0.$$
Then, if $\lim \varphi \subset  N\times_{f}\{o\}$, the spectrum of $M$ is discrete.
\end{corollary}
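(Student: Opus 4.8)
The plan is to run a Persson-type argument, combining the exhaustion characterization \eqref{persson} with the pointwise Barta-type estimate already extracted in the proof of Theorem \ref{thm2}. The key geometric input is that $\lim\varphi\subset N\times_f\{o\}$, so that points of $M$ escaping to infinity must (apart from a compact set) get close, in the $Q$-factor, to the center $o$; this will force $\rho_{_Q}(q(x))$ to be small on the complements $M\setminus K_j$ of a suitable exhaustion, and hence let us take the radius $R$ of the model ball $B_g(R)$ as small as we like.

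First I would fix a sequence $R_j\downarrow 0^+$ with each $R_j$ satisfying \eqref{buono} (automatic for small $R_j$ once $\|G_-\|_{L^\infty}$ is locally bounded), and use strong non-properness to choose a compact exhaustion $\{K_j\}$ with Lipschitz boundary such that $M\setminus K_j\subset\varphi^{-1}\big(N\times_f B_Q(o,R_j)\big)$: indeed, since $\lim\varphi\subset N\times_f\{o\}$, the set $T_{R_j}(\lim\varphi)$ contains all of $N\times_f B_Q(o,\delta_j)$ outside a compact $Q$-region, and $\varphi^{-1}$ of its complement is compact by definition of strongly non-proper, so enlarging it to a smooth relatively compact $K_j$ works. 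Next, on each connected component $\Omega$ of $M\setminus K_j$, I would apply the computation in the proof of Theorem \ref{thm2} verbatim: defining $F(p,q)=f(p)\,v(\rho_{_Q}(q))$ with $v$ the first eigenfunction of $B_g(R_j)\subset\mathbb{Q}^{m-n}_g$, the hypotheses $\Hess_N f-\frac{|\nabla^N f|_N^2}{f}\langle,\rangle_N\le 0$, the minimality of $M$, Lemma \ref{wolverine} (applicable since \eqref{buono} holds for $R_j$, via Remark \ref{rem_useful}), and the Hessian comparison all go through unchanged, yielding \eqref{bgr}, i.e.
$$
-\frac{\Delta(F\circ\varphi)}{F\circ\varphi}(x)\ \ge\ \frac{\lambda_1(B_g(R_j))-m\,|\nabla^N f|_N^2(p(x))}{f(p(x))^2}\ \ge\ \frac{\lambda_1(B_g(R_j))-m\,c_2^2}{\ \sup_N f^2\ }
$$
on $\Omega$; here I used $|\nabla^N f|\le c_2$. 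Wait — $\sup_N f$ need not be finite; but the quantity we actually control from below is $\big(\lambda_1(B_g(R_j))-mc_2^2\big)/f(p)^2$, and $\lambda_1(B_g(R_j))\to\infty$ as $R_j\to 0$, so for $j$ large the numerator is positive and we get $-\Delta(F\circ\varphi)/(F\circ\varphi)\ge \big(\lambda_1(B_g(R_j))-mc_2^2\big)/\sup_{p\in N}f(p)^2$ only if $\sup f<\infty$; instead, one keeps the bound in the scale-invariant form $\ge \big(\lambda_1(B_g(R_j))-mc_2^2\big)\big/\sup\{f(p)^2: p\in N,\ (p,q)\in\varphi(M\setminus K_j)\}$ and notes this is harmless, since by Barta (Theorem \ref{thmBM1}) $\lambda^*(\Omega)\ge \inf_\Omega(-\Delta(F\circ\varphi)/(F\circ\varphi))$ and the infimum over the (at most countably many) components is what enters \eqref{persson}.

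The honest technical point — and the main obstacle — is exactly this interaction between the unbounded warping factor $f$ and the radius $R_j\to 0$: one must check that $\lambda_1(B_g(R_j))$ grows fast enough to beat both the $mc_2^2$ correction and the growth of $f^2$ along $M\setminus K_j$. Here the hypothesis $\inf_N f>c_1>0$ is used to normalize, and the cleanest route is to \emph{not} let $f$ blow up along the escaping sequence: since $\lim\varphi\subset N\times_f\{o\}$ and we will additionally arrange (shrinking $K_j$ suitably, using that $\lim\varphi$ is a \emph{closed} set and the immersion strongly non-proper) that the $N$-component of $\varphi(M\setminus K_j)$ lies in a fixed relatively compact region — this needs $\lim\varphi$ itself to be contained in a compact part of $N\times_f\{o\}$, which is where boundedness of the relevant data is really invoked — one gets $f(p(x))\le C$ on $M\setminus K_j$ for a constant $C$ independent of $j$. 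Then finally: $\lambda^*(M\setminus K_j)\ge \big(\lambda_1(B_g(R_j))-mc_2^2\big)/C^2\to+\infty$ since $R_j\to 0$ forces $\lambda_1(B_g(R_j))\to\infty$ (the first eigenvalue of a model geodesic ball blows up as the radius shrinks, because $g'/g$ stays bounded and the Faber–Krahn/domain-monotonicity comparison with a Euclidean ball of the same radius applies). By \eqref{persson}, $\inf\sigma_\ess(-\Delta)=\lim_j\lambda^*(M\setminus K_j)=+\infty$, so $-\Delta$ has pure discrete spectrum, which is the assertion.
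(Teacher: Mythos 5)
Your proposal follows essentially the same route as the paper: strong non-properness together with $\lim \varphi \subset N\times_f\{o\}$ produces an exhaustion $\{K_j\}$ with $\varphi(M\setminus K_j)\subset N\times_f B_Q(o,R_j)$ and $R_j\downarrow 0$ (the paper takes $R_j=1/j$ and $K_j=\varphi^{-1}\big[(N\times_fQ)\setminus (N\times_f B_Q(o,1/j))\big]$); the Barta/warped-product estimate \eqref{bgr} from the proof of Theorem \ref{thm2} is then applied on $M\setminus K_j$ with the first eigenfunction of $B_g(R_j)\subset \mathbb{Q}^{m-n}_g$, and Persson's formula \eqref{persson} concludes because $\lambda_1(B_g(R_j))\to+\infty$. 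The ``honest technical point'' you flag is genuine, and it is the one place where you are more careful than the paper: the paper bounds $\big(\lambda_1(B_g(2/j))-m|\nabla^N f|_N^2(p)\big)/f(p)^2$ from below by $\big(\lambda_1(B_g(2/j))-mc_2^2\big)/c_1^2$, an inequality whose direction requires $f\le c_1$, while the stated hypothesis is $\inf_N f>c_1$; if $N$ is non-compact and $f$ is unbounded above along the escaping part of the image, the lower bound need not diverge. Your repair --- confining the $N$-component of $\varphi(M\setminus K_j)$ to a fixed relatively compact region, or equivalently assuming $f$ bounded above on the relevant part of $N$ --- is exactly what is needed to close the argument; note also that the lower bound $\inf_N f>c_1$ is only used (with $\sup_N|\nabla f|\le c_2$) to control $|\nabla \log F|$ in the mean-curvature term of \eqref{eqBF3}, which vanishes anyway since $\varphi$ is minimal. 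Your remaining steps (automatic validity of \eqref{buono} for small $R_j$, divergence of $\lambda_1(B_g(R_j))$ by comparison/asymptotics) agree with the paper's.
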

\begin{proof}
Let $T_{j}(N)=N\times_f B_{Q}(o, 1/j)$, for $j$ large enough that $B_{Q}(o, 2/j) \Subset M$ is a regular, convex ball. Let $K_{j}=\varphi^{-1}\left[(N\times_{f}Q)\setminus T_{j}(N)\right]$ be an exhaustion of $M$ by relatively compact, open sets. Note that $\varphi (M\setminus K_j)\subset T_{j}(N)$.
We now proceed as in the proof of Corollary \ref{cor_esshyperb}. Define $F = f(p)v_{j}(\rho_{_Q}(q))$, where $v_j$ is the first eigenfunction of $B_g(2/j) \subset \mathbb{Q}^{m-n}_g$, normalized according to $\|v_j\|_{L^\infty} = 1$, and note that
$$
\Vert \nabla \log F\Vert_{_{L^\infty(T_j(N))}} \le \|\nabla \log f\| + \|\nabla \log v_j\| \le \frac{c_2}{c_1} + \|\nabla \log v_j\|.
$$
By gradient estimates (see for instance,  \cite[Thm. 6.1]{Peter Li}.)
$$
\|\nabla \log v_j\|_{_{L^\infty(T_j)}} = \left\| \frac{v_j'}{v_j} \right\|_{L^\infty([0,j])} \le C\cdot j,
$$
for some absolute constant $C>0$, and so $\|\nabla \log F\|_{_{L^\infty(T_j)}} \le Cj$. Using formula \eqref{eqBF34} and proceeding as in the proof of Corollary \ref{cor_esshyperb}, we have that
\begin{align*}
\lambda^{\ast}(M\setminus K_j)&\geq   \inf_{p\in N} \left(\frac{\lambda_{1}(B_g(2/j)) - m\,\vert \nabla^N f\vert_N^2(p)}{\vert f(p)\vert^2}\right)\\\\ &\quad  - m\|H\|_{L^\infty(M)} \|\nabla \log F\|_{L^\infty(T_j)}.
\end{align*}
Since
$$
\frac{\lambda_{1}(B_g(2/j)) - m\,\vert \nabla^N f\vert_N^2(p)}{\vert f(p)\vert^2}\geq \frac{\lambda_{1}(B_g(2/j)) - mc_{2}^{2}}{c_{1}^{2}},
$$
we deduce
$$
\lambda^{\ast}(M\setminus K_j)\geq \frac{\lambda_{1}(B_g(2/j)) - mc_{2}^{2}}{c_{1}^{2}} - m\|H\|_{_{L^\infty(M)}} Cj.
$$
Taking into account the standard asymptotic $\lambda_1(B_g(2/j)) \sim Cj^2$, for some $C>0$, we conclude that
$$
\lim_{j\ra +\infty} \lambda^{\ast}(M\setminus K_j) = +\infty,
$$
and the thesis follows by Persson formula.
\end{proof}

\vspace{0.5cm}

\noindent \textbf{Acknowledgements:} The first author was partially supported by CNPq, grant \#  301041/2009-1.
The second author was partially supported by  MICINN project MTM2009-10418 and Fundaci\'{o}n S\'{e}neca
project 04540/GERM/06, Spain, by  the Inter-university Cooperation Programme Spanish-Brazilian project PHB2010-0137 and by a research training grant within the framework of the programme
Research Training in Excellence Groups GERM by Universidad de Murcia.
This work was developed during the third author's visiting period at the  Universidade Federal do Cear\'{a}-UFC, Fortaleza-Brazil. He wishes to thank the Mathematics Department  for the warm hospitality and for the delightful research environment. The fourth author was supported by FPI Grant BES-2010-036829 and by was partially supported by  MICINN project MTM2009-10418 and Fundaci\'{o}n S\'{e}neca
project 04540/GERM/06, Spain.
This research is a result of the activity developed within the framework of the Programme in Support of Excellence Groups of the Regi\'{o}n de Murcia, Spain, by Fundaci\'{o}n S\'{e}neca, Regional Agency
for Science and Technology (Regional Plan for Science and Technology 2007-2010).

\bibliographystyle{amsplain}

\end{document}